\theoremstyle{plain}
\newtheorem{theorem}{Theorem}
\newtheorem{lemma}[theorem]{Lemma}
\newtheorem{proposition}[theorem]{Proposition}
\newtheorem{corollary}[theorem]{Corollary}
\newtheorem*{example*}{Example}
\newtheorem{example}{Example}
\newtheorem{remark}{Remark}
\newtheorem*{remark*}{Remark}
\newtheorem{acknowledgement*}{Acknowledgement}
\newcommand{\rk}{\operatorname{rk}}
\newcommand{\sign}{\operatorname{sgn}}
\newcommand{\col}{\operatorname{col}}
\newcommand{\E}{\operatorname{\mathbb{E}}}
\newcommand{\argmin}{\operatorname*{\arg\min}}
\newcommand{\sgn}{\operatorname{sgn}}
\newcommand{\Bzero}{\mathcal{B}_0}
\newcommand{\BM}{\mathcal{B}_\mathcal{M}}
\newcommand{\M}{\mathcal{M}}
\newcommand{\Mstruct}{\mathscr{M}}
\newcommand{\B}{\mathscr{B}}
\renewcommand{\O}{\mathcal{O}}
\newcommand{\R}{\mathbb{R}}
\newcommand{\Rge}{\R_{>0}}
\newcommand{\Rgeq}{\R_{\geq 0}}
\newcommand{\ind}{\mathds{1}}
\newcommand{\eps}{\varepsilon}
\newcommand{\betaL}{\hat\beta_{\textnormal{\tiny L}}}
\newcommand{\betaLj}{\hat\beta_{\textnormal{\tiny L},j}}
\newcommand{\betaLx}[1]{\hat\beta_{\textnormal{\tiny L},{#1}}}
\newcommand{\betaLS}{\hat\beta_{\textnormal{\tiny LS}}}
\newcommand{\iid}{\overset{\text{\tiny{iid}}}{\sim}}
\renewcommand{\emptyset}{\varnothing}
\newcommand{\pd}[2]{\frac{\partial #1}{\partial #2}}
\newcommand\myatop[2]{\genfrac{}{}{0pt}{}{#1}{#2}}
\begin{document}

\title{On the Distribution, Model Selection Properties and Uniqueness
of the Lasso Estimator in Low and High Dimensions}

\author{Karl Ewald and Ulrike Schneider\\[2ex]
Vienna University of Technology} 

\date{}

\maketitle

\begin{abstract}
We derive expressions for the finite-sample distribution of the Lasso
estimator in the context of a linear regression model in low as well
as in high dimensions by exploiting the structure of the optimization
problem defining the estimator. In low dimensions, we assume full rank
of the regressor matrix and present expressions for the cumulative
distribution function as well as the densities of the absolutely
continuous parts of the estimator. Our results are presented for the
case of normally distributed errors, but do not hinge on this
assumption and can easily be generalized. Additionally, we establish
an explicit formula for the correspondence between the Lasso and the
least-squares estimator. We derive analogous results for the
distribution in less explicit form in high dimensions where we make no
assumptions on the regressor matrix at all. In this setting, we also
investigate the model selection properties of the Lasso and show that
possibly only a subset of models might be selected by the estimator,
completely independently of the observed response vector. Finally, we
present a condition for uniqueness of the estimator that is necessary
as well as sufficient.

\medskip

{\bf Keywords:} Lasso, distribution, model selection, uniqueness.

\end{abstract}

\section{Introduction} 
\label{sec:intro}

The distribution of the Lasso estimator \citep{Tibshirani96} has been
an object of study in the statistics literature for a number of years.
The often cited paper by \cite{KnightFu00} gives the asymptotic
distribution of the Lasso in the framework of conservative model
selection in a low-dimensional (fixed-$p$) framework by listing the
limit of the corresponding stochastic optimization.
\cite{PoetscherLeeb09} derive explicit expressions of the distribution
in finite samples as well as asymptotically for all large-sample
regimes of the tuning parameter (``conservative'' as well as
``consistent model selection'') in the framework of orthogonal
regressors. More recently, \cite{Zhou14} gives high-level information
on the finite-sample distribution for arbitrary designs in low and
high dimensions, geared towards setting up a Monte-Carlo approach to
infer about the distribution. In \cite{EwaldSchneider18}, the
large-sample distribution of the Lasso is derived in a low-dimensional
framework for the large-sample regime of the tuning parameter not
considered in \cite{KnightFu00}. Moreover, \cite{JagannathUpadhye18}
consider the characteristic function of the Lasso to obtain
approximate expressions for the marginal distribution of
one-dimensional components of the Lasso when these components are
``large'', therefore not having to consider the atomic part of the
estimator.

In this paper, we exactly and completely characterize the distribution
of the Lasso estimator in finite samples in the context of a linear
regression model with normal errors. In low dimensions, we give
formulae for the cumulative distribution function (cdf), as well as
the density functions conditional on which components of the estimator
are non-zero. We do so assuming full column rank of the regressor
matrix. Our results do not hinge on the normality assumption of the
errors, but can easily be extended to more general error
distributions. We also exactly quantify the correspondence between the
Lasso and least-squares (LS) estimator, depending on the regressor
matrix and tuning parameters only.

In a high-dimensional setting, we make absolutely no assumptions on
the regressor matrix. We give formulae for the probability of the
Lasso estimator falling into a given set and exactly quantify the
relationship between the Lasso estimator and the data object $X'y$.
Through this relationship, we also learn that the Lasso may never
select certain models, this property depending only on the regressor
matrix and the penalization weights and being independent of the
observed response vector. In fact, we can characterize a so-called
structural set that contains all covariates that are part of a Lasso
model model for some response vector. This structural set can be
identified by how the row space of the regressor matrix intersects a
cube centered at the origin whose side lengths are determined by the
penalization weights. The set may not contain all indices, in which
case the Lasso estimator will rule out certain covariates for all
possible observations of the dependent variable. This is related to
the idea of so-called SAFE rules \citep{TibshiraniEtAl12} that can
discard covariates for Lasso solutions for a fixed value of the
dependent variable.

Finally, we present a condition for uniqueness of the Lasso estimator
that is both necessary and sufficient, again related to how the row
space of the regressor matrix intersects the above mentioned cube.
Previously, only a sufficient condition for uniqueness has been known
\citep[see e.g.][]{Tibshirani13,AliTibshirani19}. The results
quantifying the relationship between the Lasso and the LS estimator or
$X'y$, respectively, are in fact completely independent of the error
distribution and merely utilize the given values of the dependent
variable and the regressor matrix. The results on model selection
properties and uniqueness use the regressor matrix and penalization
weights only.

The paper is organized as follows. We introduce the setting and
notation in Section~\ref{sec:setting} and state basic results used
throughout the paper. The low-dimension case is treated in
Section~\ref{sec:LOW}, whereas we consider the high-dimensional case
in Section~\ref{sec:HIGH}. We conclude in
Section~\ref{sec:conclusion}.

\section{Setting, Notation, and Basic Results} 
\label{sec:setting}
Consider the linear model
\begin{equation} 
\label{eqn:model}
y = X \beta + \eps,
\end{equation}
where $y$ is the observed $n \times 1$ data vector, X is the $n \times
p$ regressor matrix which is assumed to be non-stochastic, $\beta \in
\R^p$ is the true parameter vector and $\eps$ the unobserved error
term. We assume that $X'\eps$ follows a
$N(0,\sigma^2X'X)$-distribution with $\sigma^2 > 0$, which, for
instance, is the case when the components of $\eps$ are independent
and identically distributed according to a
$N(0,\sigma^2)$-distribution. Our results depend on the distribution
of $X'\eps$ only, and we chose the normal distribution for
presentation purposes.\footnote{In fact, some of the following results
are completely independent of the error distribution. These are
Lemma~\ref{lem:kkt} and Corollary~\ref{cor:kkt},
Theorems~\ref{thm:shrink} and \ref{thm:shrinkHIGH},
Corollary~\ref{cor:modelselection} as well as
Theorems~\ref{thm:struct} and \ref{thm:unique}.}
 
We consider the \emph{weighted Lasso estimator} $\betaL$, defined as a
solution to the minimization problem
\begin{equation} 
\label{eqn:lassodef}
\min_{\upbeta \in \R^p} L(\upbeta) = \min_{\upbeta \in \R^p}  
\|y - X\upbeta\|^2 + 2\sum_{j=1}^p \lambda_{n,j} |\upbeta_j|,
\end{equation}
where $\lambda_{n,j}$, are non-negative user-specified tuning
parameters that will typically depend on $n$. To ease notation, we
shall suppress this dependence for the most part and write
$\lambda_{n,j} = \lambda_j$ for each $j$. Note that if $\lambda_j = 0$
for all $j$, the weighted Lasso is equal to LS estimation, and that
$\lambda_1 = \dots = \lambda_p > 0$ corresponds to the classical Lasso
estimator as proposed by \cite{Tibshirani96}, to which case we also
refer to by uniform tuning. For later use, let $\lambda =
(\lambda_{1},\dots,\lambda_{p})'$ and define $\M_0 = \{j : \lambda_j =
0\}$, the index set of all unpenalized coefficients. If $\M_0 \neq
\emptyset$, we speak of partial tuning. Note that $\M_0$ contains the
indices of covariates that will be part of any model chosen by the
Lasso. We stress dependence on the unknown parameter $\beta$ when it
occurs, but do not specify dependence on $X$, $y$ or $\lambda$ as
these quantities are available to the user.

The following notation will be used throughout the paper. Let
$\phi_{(\mu, \Sigma)}$ denote the Lebesgue-density of a normally
distributed random variable with mean $\mu$ and covariance matrix
$\Sigma$, and let $\Phi$ be the cdf of a univariate standard normal
distribution. For a vector $m \in \R^p$ and an index set $I \subseteq
\{1,\dots,p\}$, the vector $m_I \in \R^{|I|}$ contains only the
components of $m$ corresponding to the elements of $I$. We write $|I|$
for the cardinality of $I$, and $I^c$ for $\{1,\dots,p\}\setminus I$,
the complement of $I$. The $1$-norm of $m$ is denoted by $\|m\|_1$
whereas the $2$-norm is simply denoted by $\|m\|$. For $x \in \R$, let
$\sign(x) = \ind_{\{x>0\}} - \ind_{\{x < 0\}}$ where $\ind$ is the
indicator function. For a set $A \subseteq \R^p$, the set $m + A= A +
m$ is defined as $\{m + z: z \in A\}$, with a analogous definitions
for $A - m$ and $m - A$. We denote the Cartesian product by $\prod$
and the kernel, column space, and rank of a matrix $C$ by $\ker(X)$,
$\col(C)$, and $\rk(C)$, respectively. The columns of $C$ are denoted
by $C_j$ whereas $C_I$, for some index set $I$, is the matrix
containing the $|I|$ columns of $C$ corresponding to indices in $I$
only. For a quadratic matrix $C$, $|C|$ denotes the determinant of
$C$. We use $\Rge$ for the positive, and $\Rgeq$ for the non-negative
real numbers.

Let $\{D_-,D_0,D_+\}$ be a partition of $\{1,\dots,p\}$ into three
sets, some of which may be empty. It will be convenient to also
describe this partition by a vector $d \in \{-1,0,1\}^p$ with $d_j =
\ind_{\{j \in D_+\}} - \ind_{\{j \in D_-\}}$. For such $d$, we denote
by $\O^d = \{z \in \R^p : \sign(z_j) = d_j \text{ for } j=1,\dots,p\}
= \{z \in \R^p : z_j < 0 \text{ for } j \in D_-, z_j = 0 \text{ for }
j \in D_0, z_j > 0 \text{ for } j \in D_+\}$. Note that $m + \beta \in
\O^d$ is short-hand notation for $m_j < -\beta_j$ for $j \in D_-$,
$m_j = -\beta_j$ for $j \in D_0$ and $m_j > -\beta_j$ for $j \in D_+$.
We write $D_\pm$ for $D_-\cup D_+$.

Finally, we state the conditions that characterize solutions to
\eqref{eqn:lassodef}, known as the Kuhn-Karush-Tucker (KKT) conditions
for the Lasso \citep[see e.g.][with the slight adaptation that we use
componentwise tuning]{Tibshirani13} which form the basis of most
proofs and will be used throughout the article.

\begin{lemma} \label{lem:kkt} 
We have that
$$
b \in \argmin_{\upbeta \in \R^p} L(\upbeta) \iff X'y \in X'Xb + \prod_{j=1}^p B_j(b_j),
$$
where 
$$
B_j(b_j) = \begin{cases}
\{\sign(b_j)\lambda_j\} & b_j \neq 0 \\
[-\lambda_j,\lambda_j] & b_j = 0.
\end{cases}
$$
\end{lemma}
Plugging in $y = X\beta + u$, we rewrite this as
\begin{corollary} \label{cor:kkt}
$$
b \in \argmin_{\upbeta \in \R^p} L(\upbeta) \iff X'\eps \in A_\beta(b)
$$
with $A_\beta(b) = X'X(b-\beta) + \prod_{j=1}^p B_j(b_j)$.
\end{corollary}

\section{The Low-dimensional Case} 
\label{sec:LOW}

Throughout this section, \emph{we assume that $X$ has full column rank
$p$}, implying that we are considering the low-dimensional setting
where $p \leq n$. This assumption is used in the following through the
fact that $W = X'\eps$ follows a non-degenerate normal distribution on
$\R^p$ in the distributional statements on $\betaL$
(Theorem~\ref{thm:prob}, Corollary~\ref{cor:allzero}). It is
additionally used through the fact that the true parameter is properly
identified in the distributional statements concerning the estimation
error $\betaL - \beta$ (Corollary~\ref{cor:prob},
Proposition~\ref{prop:density}, Theorem~\ref{thm:cdf}). We also rely
on this assumption in Theorem~\ref{thm:shrink} through the
invertibility of $X'X$ and the existence of the LS estimator.

\begin{theorem} \label{thm:prob}
Let $z \in \R^p$ and let $d = \sgn(z)$ with $\{D_-,D_0,D_+\}$ being the
corresponding partition of $\{1,\dots,p\}$.
\begin{align*}
& P(\betaLj \leq z_j \text{ for } j \in  D_-, \; \betaLj = 0 \text{ for } j \in D_0, \; 
\betaLj \geq z_j \text{ for } j \in D_+) \\[2ex]
& = \mathop{\int\dots\int}_{\myatop{m_j \geq z_j - \beta_j}{j \in D_+}} \!
\mathop{\int\dots\int}_{\myatop{s_j \in [-\lambda_j,\lambda_j]}{j \in D_0}} \!
\mathop{\int\dots\int}_{\myatop{m_j \leq z_j - \beta_j}{j \in D_-}} \!
\phi_{(0, \sigma^2X'X)}(X'Xm_\beta + s_\lambda) |X_{D_\pm}'X_{D_\pm}|\, dm_{D_-}ds_{D_0}dm_{D_+},
\end{align*}
where $m_\beta$ and $s_\lambda \in \R^p$ are given by
$(m_\beta)_{D_\pm} = m_{D_\pm}$, $(m_\beta)_{D_0} =
-\beta_{D_0}$ and $(s_\lambda)_{D_-} = -\lambda_{D_-}, s_{D_+} =
\lambda_{D_+}$, $(s_\lambda)_{D_0} = s_{D_0}$, respectively.
\end{theorem}

\begin{proof} 
We need to compute the probability of the event $\{\betaL \in B_z\}$
where $B_z = \{b \in \R^p : b_j \leq z_j \text{ for } j \in D_-, \, b_j =
0 \text{ for } j \in D_0, \, b_j \geq z_j \text{ for } j \in D_+\}$. By
Corollary~\ref{cor:kkt}, this event is equivalent to the event $\{W
\in A_\beta(B_z)\}$ with $A_\beta(B_z) = \cup_{b \in B_z} A_\beta(b)$
and $A_\beta(b)$ as defined in Corollary~\ref{cor:kkt}. As $W = X'\eps
\sim N(0,\sigma^2 X'X)$, the probability we are looking for is
therefore given by
$$
\int_{A_\beta(B_z)} \phi_{(0,\sigma^2X'X)}(w)dw.
$$
We now look at the structure of the set $A_\beta(B_z)$ more
concretely. Since $\sgn(b) = \sgn(z)$ for all $b \in B_z$, we get
\begin{align*}
A_\beta(B_z) & \; = \;\; X'XB_z - X'X\beta + \prod_{j=1}^p B_j(z_j) \\ 
= \;\; & \{X'X(b-\beta) : b_j \leq z_j \text{ for } j \in D_-, \,
b_j = 0 \text{ for } j \in D_0, \, b_j \geq z_j \text{ for } j \in D_+\} \\
& + \{s : s_j = -\lambda_j \text{ for } j \in
D_-, \, |s_j| \leq \lambda_j \text{ for } j \in D_0, \, s_j = \lambda_j \text{ for } j \in D_+\},
\end{align*}
which, after applying the substitution $w = X'Xm_\beta + s_\lambda$, 
yields the claim.
\end{proof}

Theorem~\ref{thm:prob} gives the distribution of $\betaL$. The
dependence on the unknown parameter $\beta$ arises in the shift
$(m_\beta)_{D_0} = -\beta_{D_0}$ as well as in the limits for the
variables of integration $m_{D_+}$ and $m_{D_-}$. In case the
regressors are orthogonal, more concretely, if $X'X = I_p$, the
probability expression in Theorem~\ref{thm:prob} can be written as
\begin{align*}
\prod_{j=1}^p \bigg( & 
\ind_{\{j \in D_-\}} \int_{-\infty}^{z_j - \beta_j} \phi_{(0,\sigma^2)}(m-\lambda_j)dm 
+ \ind_{\{j \in D_0\}} \int_{-\lambda_j}^{\lambda_j} \phi_{(0,\sigma^2)}(s - \beta_j)ds \\
& + \ind_{\{j \in D_+\}} \int_{z_j -\beta_j}^\infty \phi_{(0,\sigma^2)}(m+\lambda_j)dm\bigg) \\
= \prod_{j=1}^p \bigg( & \ind_{\{j \in D_-\}} P(Z_j + \beta_j \leq z_j - \lambda_j) 
+ \ind_{\{j \in D_0\}} P(-\lambda_j \leq Z_j + \beta_j \leq \lambda_j) \\ 
& + \ind_{\{j \in D_+\}} P(Z_j + \beta_j \geq z_j + \lambda_j) \bigg),
\end{align*}
where $Z_j \iid N(0,\sigma^2)$, which is consistent with the
well-known fact that the Lasso is equivalent to componentwise
soft-thresholding in this case, also treated in
\cite{PoetscherSchneider09}.

The distribution of the estimation error $\hat u = \betaL - \beta$ can
now be derived from Theorem~\ref{thm:prob} as a corollary.

\begin{corollary} \label{cor:prob} 
Let $z \in \R^p$. Let $d = \sgn(z + \beta) \in \{-1,0,1\}^p$ and let
$\{D_-,D_0,D_+\}$ be the corresponding partition of $\{1,\dots,p\}$. Then
\begin{align*}
& P(\hat u_j \leq z_j \text{ for } j \in  D_-, \; 
\hat u_j = z_j \text{ for } j \in D_0, \; \hat u_j \geq z_j
\text{ for } j \in D_+) \\[2ex]
& = \mathop{\int\dots\int}_{\myatop{m_j \geq z_j}{j \in D_+}} \!
\mathop{\int\dots\int}_{\myatop{s_j \in [-\lambda_j,\lambda_j]}{j \in D_0}} \!
\mathop{\int\dots\int}_{\myatop{m_j \leq z_j}{j \in D_-}} \!
\phi_{(0, \sigma^2X'X)}(X'Xm_\beta + s_\lambda) |X_{D_\pm}'X_{D_\pm}| \,dm_{D_-}ds_{D_0}dm_{D_+},
\end{align*}
where $m_\beta$ and $s_\lambda \in \R^p$ are given by
$(m_\beta)_{D_\pm} = m_{D_\pm}$, $(m_\beta)_{D_0} = -\beta_{D_0}$ and
$(s_\lambda)_{D_-} = -\lambda_{D_-}, s_{D_+} = \lambda_{D_+}$,
$(s_\lambda)_{D_0} = s_{D_0}$, respectively.
\end{corollary}

\begin{proof}
Apply Theorem~\ref{thm:prob} using $z + \beta$ rather than $z$.
\end{proof}

Another direct consequence of Theorem~\ref{thm:prob} is a concrete
formula for the probability of the extreme event of the Lasso setting
all components equal to zero.

\begin{corollary} \label{cor:allzero}
$$
P(\betaL = 0) = \int_{-\lambda_p}^{\lambda_p} \dots \int_{-\lambda_1}^{\lambda_1}
\phi_{(X'X\beta,\sigma^2 X'X)}(w) \, dw.
$$
\end{corollary}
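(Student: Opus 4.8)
The plan is to specialize Corollary~\ref{cor:probs} to the degenerate case in which the Lasso sets every coordinate to zero. The event $\betaL = 0$ is precisely the event ``$\betaLj = 0$ for all $j$'', which corresponds to taking $z = 0$ in Corollary~\ref{cor:probs}. Then $d = \sgn(0) = 0$, so the induced partition is $D_0 = \{1,\dots,p\}$ with $D_- = D_+ = \emptyset$.

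First I would substitute this choice into the integral formula of Corollary~\ref{cor:probs}. Because $D_-$ and $D_+$ are empty, the inner integrations over $m_{D_-}$ and $m_{D_+}$ vanish, and the only surviving integration is over $s_{D_0} = s \in \R^p$, with each component $s_j$ ranging over $[-\lambda_j,\lambda_j]$. With $D_0 = \{1,\dots,p\}$, the defining relations give $m_\beta = -\beta$ and $s_\lambda = s$, so the argument of the density collapses to $X'Xm_\beta + s_\lambda = s - X'X\beta$.

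The final step is the elementary identity that translating the argument of a multivariate normal density is the same as shifting its mean, namely $\phi_{(0,\,\sigma^2 X'X)}(s - X'X\beta) = \phi_{(X'X\beta,\,\sigma^2 X'X)}(s)$. Feeding this into the integral over the box $\prod_{j=1}^p [-\lambda_j,\lambda_j]$ produces exactly the claimed expression.

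I expect no genuine obstacle here, since the result is a direct specialization. The only points requiring minor care are confirming that $\betaL = 0$ really corresponds to the full partition $D_0 = \{1,\dots,p\}$ (rather than to some boundary case that would need separate treatment) and tracking the simplifications of $m_\beta$ and $s_\lambda$ so that the density argument reduces cleanly to $s - X'X\beta$.
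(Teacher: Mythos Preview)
Your proposal is correct and is exactly the specialization the paper intends: Corollary~\ref{cor:allzero} is stated without a separate proof because it follows immediately from Corollary~\ref{cor:probs} with $z=0$, $D_0=\{1,\dots,p\}$, $m_\beta=-\beta$, $s_\lambda=s$, and the mean-shift identity for the Gaussian density.
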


\begin{proof}
Apply Theorem~\ref{thm:prob} using $z =0$.
\end{proof}

\begin{remark} \label{rem:sets}
To illustrate the structure behind the proof of
Theorem~\ref{thm:prob}, note that the equivalence of the events
$\{\hat\beta \in B_z\}$ and $\{W \in A_\beta(B_z)\}$ is shown through
through Corollary~\ref{cor:kkt}. The equivalence holds due to the
structure of the optimization problem defining $\betaL$ and does not
depend on the distribution of $W = X'\eps$. In this sense, the
distributional results do not hinge on the normality assumption of the
errors and can easily be generalized to other error distributions. The
relationship and shape of the sets $B_z$ and $A_\beta(B_z)$ is
illustrated in Figure~\ref{fig:intAreas}. Note that $A_\beta$ depends
on $\lambda$, whereas $B_z$ does not.
\end{remark}

\begin{figure}[htp]
\centering
\begin{subfigure}{\textwidth}
\includegraphics[width=0.36\textwidth]{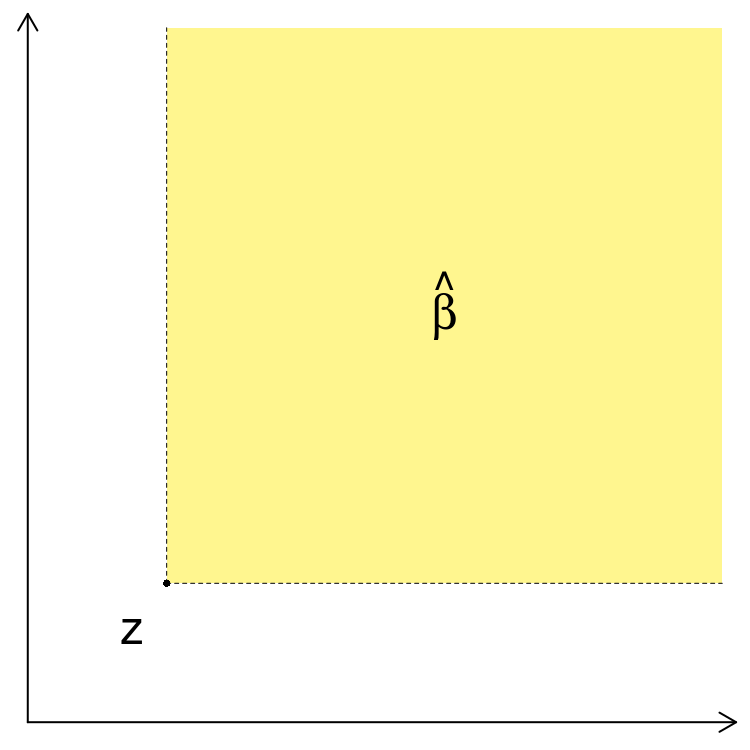} \hfill
\includegraphics[width=0.36\textwidth]{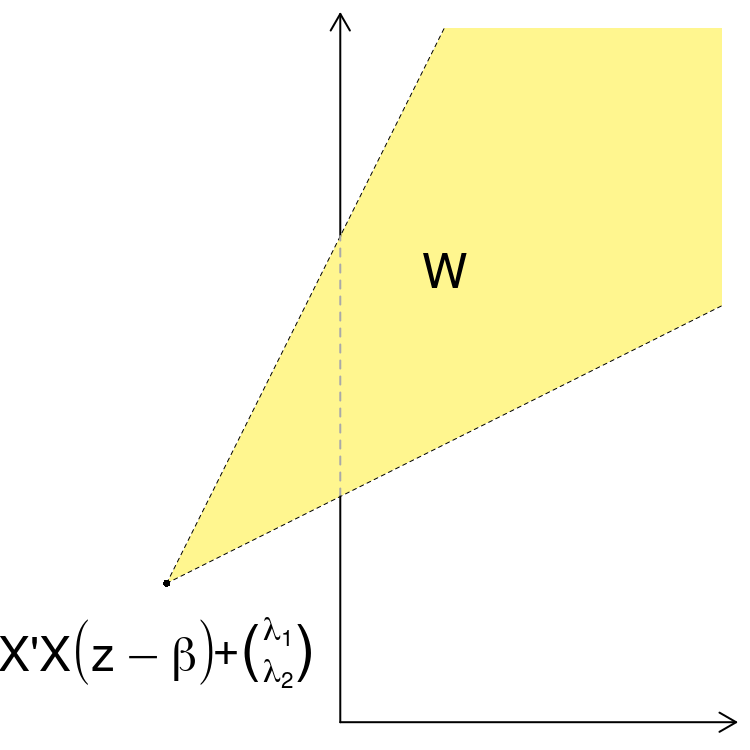}
\caption{$z_1, z_2 > 0$}
\end{subfigure}

\bigskip
\bigskip

\begin{subfigure}{\textwidth}
\includegraphics[width=0.36\textwidth]{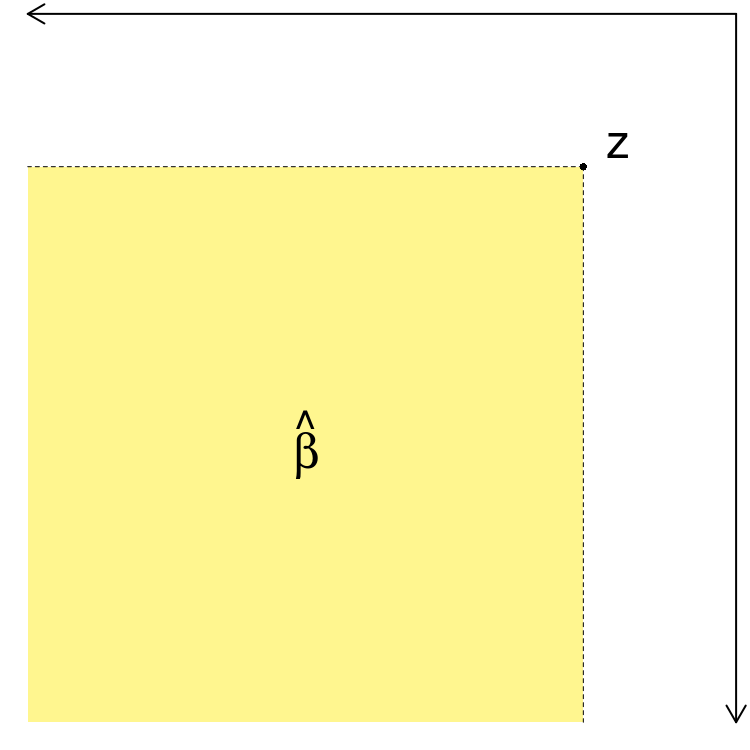} \hfill
\includegraphics[width=0.36\textwidth]{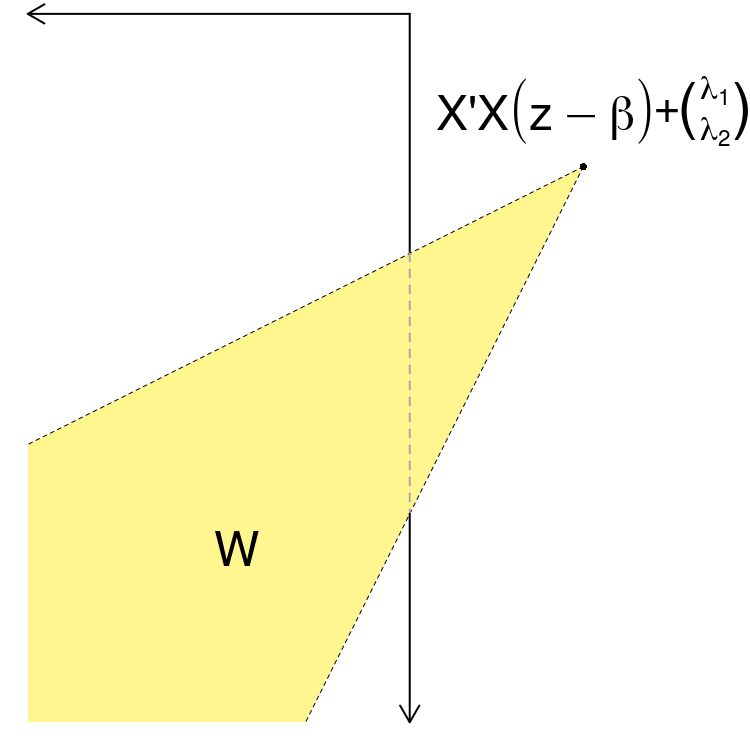}
\caption{$z_1, z_2 < 0$}
\end{subfigure}

\bigskip
\bigskip

\begin{subfigure}{\textwidth}
\includegraphics[width=0.18\textwidth]{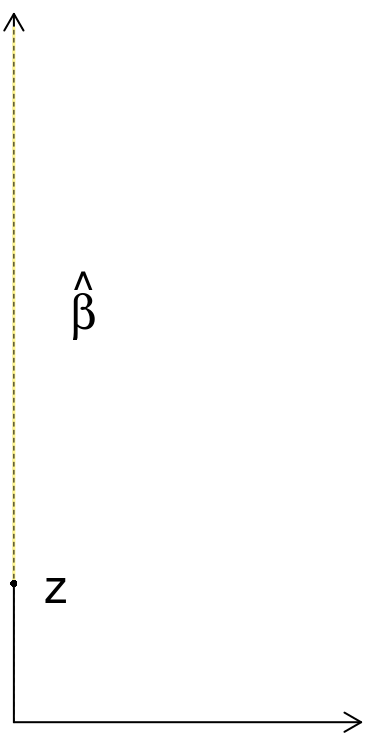} \hfill
\includegraphics[width=0.36\textwidth]{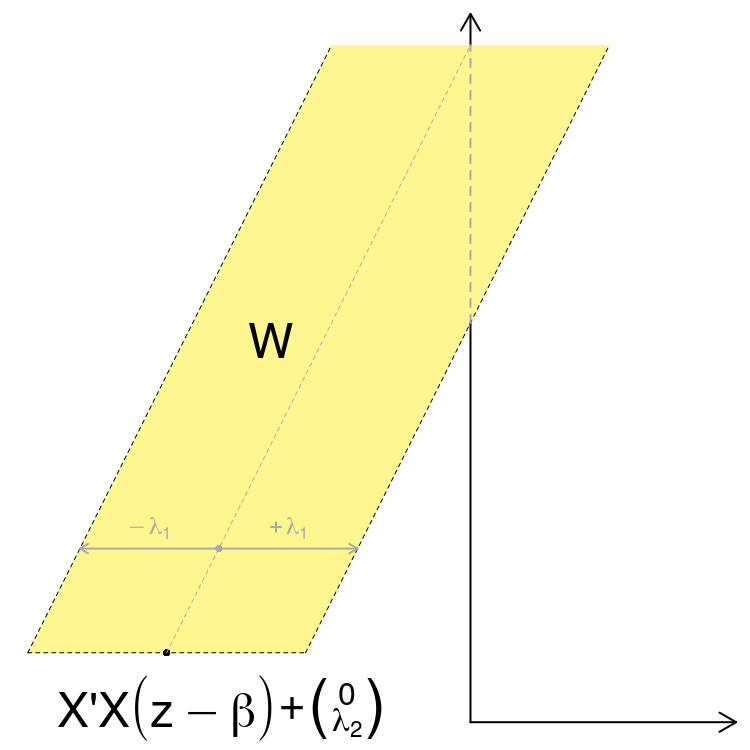}
\caption{$z_1 = 0$ and $z_2 > 0$}
\end{subfigure}

\caption{\label{fig:intAreas} The sets $B_z$ are displayed on the
left-hand side, the corresponding sets $A_\beta(B_z)$ are displayed on
the right-hand side. Illustration for $p=2$ and various values of $z$,
see Remark~\ref{rem:sets} for details.}
\end{figure}

\begin{remark} \label{rem:normLOW}
Theorem~\ref{thm:prob}, Corollary~\ref{cor:prob},
Corollary~\ref{cor:allzero}, Proposition~\ref{prop:density} and
Theorem~\ref{thm:cdf} do not rely on the normal distribution, as just
mentioned. Indeed, the results equally hold for any other absolutely
continuous distribution of $X'\eps$ (with respect to Lebesgue
measure), only the expression $\phi_{(0,\sigma^2X'X)}$ would have to
be replaced by the corresponding density function of $X'\eps$.
Moreover, the results also hold for discrete $X'\eps$ in which case
the integral would have to be replaced by a sum, and the density
function by the corresponding probability mass function.
\end{remark}

Theorem~\ref{thm:prob} now puts us into a position to fully specify
the distribution of the Lasso estimator. In case $\lambda_j>0$ for all
$j$, one easily sees from the preceding corollary that this
distribution is not absolutely continuous with respect to the
$p$-dimensional Lebesgue-measure, and thus no density exists. One can,
however, represent the distribution through Lebesgue-densities after
conditioning on which components of the estimator are negative, equal
to zero, and positive, which we shall do in the sequel.

\begin{proposition} \label{prop:density}
The distribution of $\hat u = \betaL - \beta$, conditional on the
event $\{\betaL \in \O^d\}$, can be represented by a
$\|d\|_1$-dimensional Lebesgue-density given by
\begin{align*}
f^d(z_{D_\pm}) = \frac{\ind\{z_\beta + \beta \in \O^d\}}{P(\betaL \in \O^d)} \!\!
\mathop{\int \dots \int}_{\myatop{s_j \in [-\lambda_j,\lambda_j]}{j \in D_0}}
\phi_{(0,\sigma^2 X'X)} \left(X'Xz_\beta + s_\lambda\right) |X'_{D_\pm}X_{D_\pm}| \, ds_{D_0},
\end{align*}
where $z_\beta$ is defined by $(z_\beta)_{D_\pm} = z_{D_\pm}$, and
$(z_\beta)_{D_0} = -\beta_{D_0}$ and $s_\lambda$ is defined by
$(s_\lambda)_{D_-} = -\lambda_{D_-}$, $(s_\lambda)_{D_+} =
\lambda_{D_+}$, and $(s_\lambda)_{D_0} = s_{D_0}$. Note that the
constants $P(\betaL \in \O^d)$ can be calculated using
Corollary~\ref{thm:prob}.
\end{proposition}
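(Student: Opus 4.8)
The plan is to obtain $f^d$ by differentiating the joint probability expression of Theorem~\ref{thm:probs} with respect to the free coordinates $z_j$, $j \in D_-^+$. The first observation is that on the event $\{\betaL \in \O^d\}$ the components indexed by $D_0$ are frozen: $\betaLj = 0$ forces $\hat u_j = -\beta_j$ for $j \in D_0$. Hence the conditional law of $\hat u$ is carried by a $|D_-^+| = \|d\|_1$-dimensional affine subspace, and it suffices to produce a Lebesgue density for the sub-vector $\hat u_{D_-^+}$.

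Next I would read Theorem~\ref{thm:probs} as a mixed-orientation distribution function of $\hat u_{D_-^+}$ on the event. Indeed, for $z$ with $\sgn(z+\beta) = d$ the theorem computes exactly $P(\hat u_j \le z_j,\, j \in D_-;\ \hat u_j \ge z_j,\, j \in D_+;\ \betaL \in \O^d)$, since for $j \in D_0$ the constraint $\hat u_j = z_j$ coincides with $z_j = -\beta_j$, i.e.\ with the frozen value on the event. Viewing this quantity as a function $G(z_{D_-^+})$, the sub-density of $\hat u_{D_-^+}$ restricted to the event is recovered by the mixed derivative $\prod_{j \in D_-} \tfrac{\partial}{\partial z_j} \prod_{j \in D_+} \bigl(-\tfrac{\partial}{\partial z_j}\bigr) G$, where the signs are chosen so that each factor is nonnegative: the $D_-$ coordinates enter through an ordinary distribution function and the $D_+$ coordinates through a survival function.

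The computation itself is then mechanical. In the integral of Theorem~\ref{thm:probs} each coordinate $z_j$ appears only as an integration limit of the corresponding $dm_j$-integral (lower limit for $j \in D_+$, upper limit for $j \in D_-$), and these limits sit in distinct integration variables. Differentiating under the integral sign and applying the fundamental theorem of calculus therefore peels off one inner integral per coordinate and evaluates the integrand at $m_j = z_j$; carrying this out for all $j \in D_-^+$ replaces $m_{D_-^+}$ by $z_{D_-^+}$ throughout, so that $m_\beta$ becomes precisely the vector $z_\beta$ of the proposition. What survives is the single integration over $s_{D_0} \in \prod_{j \in D_0}[-\lambda_j,\lambda_j]$ of $\phi_{(0,\sigma^2 X'X)}(X'X z_\beta + s_\lambda)$, which is the claimed unnormalised density. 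Dividing by $P(\betaL \in \O^d)$, available from Corollary~\ref{cor:probs}, yields the conditional density, and the factor $\ind\{z_\beta + \beta \in \O^d\}$ merely records its support, since $G$ is the relevant distribution function only for $z$ with $\sgn(z+\beta) = d$.

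The main thing to be careful about is the differentiation step: I would justify interchanging differentiation and integration by dominated convergence, the Gaussian integrand and its $z$-derivatives being uniformly integrable over the relevant region on compact subsets of the set where $\sgn(z+\beta) = d$, and I would track the orientation of each partial derivative so that the $D_+$ survival-function coordinates contribute the correct positive sign. A secondary point, which follows from the construction rather than requiring separate work, is that $f^d$ integrates to $1$: the total mass of the sub-density over the sign-constrained region equals $P(\betaL \in \O^d)$ by sending the limits in Theorem~\ref{thm:probs} to their extremes, which is exactly the normalising constant divided out.
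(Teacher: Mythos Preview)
Your proposal is correct and follows essentially the same route as the paper: start from the probability expression of Theorem~\ref{thm:probs}, differentiate in the coordinates $z_j$, $j\in D_-^+$, and normalise by $P(\betaL\in\O^d)$. If anything, you are more careful than the paper about the orientation of the partial derivatives (distinguishing the CDF-type coordinates in $D_-$ from the survival-type coordinates in $D_+$), whereas the paper simply differentiates and takes the absolute value at the end.
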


\begin{proof}
Observe that
$$
f^d(z_{D_\pm}) = \left(\pd{}{z_j}\right)_{j \in D_\pm} P\left(\hat u_j
\leq z_j \text{ for } j \in D_\pm | \betaL \in \O^d\right),
$$
and note that by Corollary~\ref{cor:prob}, for any $z \in \R^p$ with
$z + \beta \in \O^d$ we have
\begin{align*}
P&\left(\hat u_j \leq z_j \text{ for } j \in D_-, \hat u_j \geq z_j
\text{ for } j \in D_+| \betaL \in \O^d\right) P(\betaL \in \O^d)\\ 
& = P\left(\hat u_j \leq z_j \text{ for } j \in D_-,
\; \betaLj = 0 \text{ for } j \in D_0, \; \hat u_j \geq z_j \text{ for } j \in D_+\right)
P(\betaL \in \O^d) \\ 
& = \mathop{\int\dots\int}_{\myatop{m_j \geq z_j}{j \in D_+}} 
\mathop{\int\dots\int}_{\myatop{m_j \leq z_j}{j \in D_-}} 
\mathop{\int\dots\int}_{\myatop{s_j \in [-\lambda_j,\lambda_j]}{j \in D_0}} 
\phi_{(0, \sigma^2X'X)}(X'Xm_\beta + s_\lambda) |X'_{D_\pm}X_{D_\pm}|\,
ds_{D_0} dm_{D_-} dm_{D_+},
\end{align*}
where $m_\beta \in \R^p$ is defined by $(m_\beta)_{D_\pm} = m_{D_-
\cup D_+} $, and $(m_\beta)_{D_0} = -\beta_{D_0}$, and $s_\lambda \in
\R^p$ is defined by $(s_\lambda)_{D_-} = -\lambda_{D_-}$,
$(s_\lambda)_{D_+} = \lambda_{D_+}$, and $(s_\lambda)_{D_0} =
s_{D_0}$. Differentiating with respect to $z_j$ with $j \in D_\pm$,
and taking the absolute value gives the density, thus completing the
proof.
\end{proof}

Besides the conditional densities, we can also specify the full cdf of
$\hat u = \betaL - \beta$ which is done in the following theorem.

\begin{theorem} \label{thm:cdf} 
The cdf of $\hat u = \betaL - \beta$ is given by
$$
F(z) = P(\hat u_1 \leq z_1,\dots,\hat u_p \leq z_p) = \sum_{d \in
\{-1,0,1\}^p}\mathop{\int\dots\int}_{\myatop{m_j \leq z_j \;\;}{j \in
D^+_-}} h^d(m_{D_\pm}) \, d\nu_{\|d\|_1},
$$
where $\nu_k$ denotes $k$-dimensional Lebesgue-measure, and where
$$  
h^d(m_{D_\pm}) = \ind\{m_\beta + \beta \in \O^d\} \!\!
\mathop{\int\dots\int}_{\myatop{s_j \in [-\lambda_j,\lambda_j]}{j \in D_0}} 
\phi_{(0,\sigma^2 X'X)} \left(X'Xm_\beta + s_\lambda\right) 
|X'_{D_\pm}X_{D_\pm}| ds_{D_0},
$$
with $m_\beta \in \R^p$ given by $(m_\beta)_{D_\pm} = m_{D_\pm}$, 
$(m_\beta)_{D_0} = -\beta_{D_0}$, and $s_\lambda \in \R^p$
given by $(s_\lambda)_{D_-} = -\lambda_{D_-}$, $(s_\lambda)_{D_0} =
s_{D_0}$, and $(s_\lambda)_{D_+} = \lambda_{D_+}$.
\end{theorem}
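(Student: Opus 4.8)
The plan is to obtain the cdf by summing contributions over the possible sign patterns of the estimator, reducing everything to the conditional densities already derived. Since the sets $\{\betaL \in \O^d\}$, for $d$ ranging over $\{-1,0,1\}^p$, form a measurable partition of $\R^p$, the first step is to write
$$
F(z) = \sum_{d \in \{-1,0,1\}^p} P\left(\hat u_j \le z_j \text{ for all } j, \; \betaL \in \O^d\right)
$$
and to evaluate each summand. This decomposition is useful because on each event $\{\betaL \in \O^d\}$ the estimation error separates into a deterministic part on the zeroed coordinates and an absolutely continuous part on the remaining ones.

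Second, I would evaluate a single summand for fixed $d$ with associated partition $\{D_-,D_+,D_0\}$. On $\{\betaL \in \O^d\}$ every coordinate $j \in D_0$ satisfies $\betaLj = 0$, hence $\hat u_j = -\beta_j$; these coordinates are non-random, so the requirement $\hat u_j \le z_j$ for $j \in D_0$ reduces to the fixed condition $-\beta_{D_0} \le z_{D_0}$. The coordinates $\hat u_{D_-^+}$, by contrast, are absolutely continuous with conditional density $f^d$ from Proposition~\ref{prop:density}. Comparing the formula for $f^d$ with the definition of $h^d$ shows $h^d = P(\betaL \in \O^d)\, f^d$, so each summand equals the integral of $h^d$ over $\{m_j \le z_j : j \in D_-^+\}$, subject to the deterministic constraint $-\beta_{D_0}\le z_{D_0}$ coming from the zeroed coordinates. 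The indicator $\ind\{m_\beta + \beta \in \O^d\}$ built into $h^d$ confines the integration to the region on which $\betaL$ genuinely exhibits the pattern $d$ on $D_-^+$, so no sign constraints need to be imposed by hand. Summing over $d$ and using disjointness of the $\O^d$ then produces the stated expression.

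The main obstacle is the mixed atomic–continuous nature of the distribution, concentrated in the bound coordinates $D_0$: one has to cleanly separate their deterministic contribution from the genuinely $\|d\|_1$-dimensional integral over $D_-^+$, and to ensure that the constraint $z_{D_0} \ge -\beta_{D_0}$ is correctly tracked in every summand (patterns violating it contribute nothing). A related subtlety is that Theorem~\ref{thm:probs} delivers a mixed-orthant probability — with $\hat u_j \le z_j$ on $D_-$ but $\hat u_j \ge z_j$ on $D_+$ — tied to the single pattern $d = \sgn(z+\beta)$, whereas the cdf needs the lower-orthant event for all $j$ together with a sum over all $d$. Routing the argument through the conditional densities $f^d$ of Proposition~\ref{prop:density}, rather than directly through Theorem~\ref{thm:probs}, is precisely what reconciles these two indexings; once the single-$d$ identity is established, assembling the sum is routine, with Corollary~\ref{cor:allzero} recovered as the all-$D_0$ term.
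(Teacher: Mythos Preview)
Your approach matches the paper's: partition by sign pattern, invoke the conditional densities $f^d$ from Proposition~\ref{prop:density}, and recognize $h^d = P(\betaL \in \O^d)\, f^d$ to obtain the integrand. Your explicit attention to the deterministic constraint $-\beta_{D_0} \le z_{D_0}$ on the zeroed coordinates is a valid subtlety that the paper's one-line argument leaves implicit.
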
 

\begin{proof}
It is easily seen that
$$
P\left(\hat u_1 \leq z_1,\dots,\hat u_p \leq z_p\right) = 
\sum_{d \in \{-1,0,1\}^p} P(\betaL \in \O^d) 
\mathop{\int\dots\int}_{\myatop{m_j \leq z_j \;\;}{j \in D^+_-}} 
f^d(m_{D_\pm}) \, d\nu_{\|d\|_1}.
$$
Plugging in the formula for $f^d$ completes the proof.
\end{proof}

For illustration of Proposition~\ref{prop:density} and
Theorem~\ref{thm:cdf}, consider Figures~\ref{fig:density} and
\ref{fig:conddensity} which display an example of the distribution of
$\hat u = \betaL - \beta$. One can see that the Lasso estimation error
follows a shifted normal distribution, conditional on the event $\hat
u_j \neq - \beta_j$ ($\betaLj \neq 0$) for each $j$, with the shift
depending on the signs of $\betaL$, as can be seen in
Figure~\ref{fig:density}. Figure~\ref{fig:conddensity}  displays the
mass which lies on the set $\{z \in \R^2: z_1=-\beta_1, z_2 \neq 0
\}$, that is, the density functions $h^{(0,1)}$ and $h^{(0,-1)}$ on
their corresponding domains. The mass on the set $\{z \in \R^2: z_1
\neq 0, z_2 = -\beta_2\}$ looks qualitatively similar to
Figure~\ref{fig:conddensity}. Note that we also have point-mass at
$-\beta$, as is pointed out by Corollary~\ref{cor:allzero}.

\begin{figure}[htp]
\centering \includegraphics[width=0.49\textwidth]{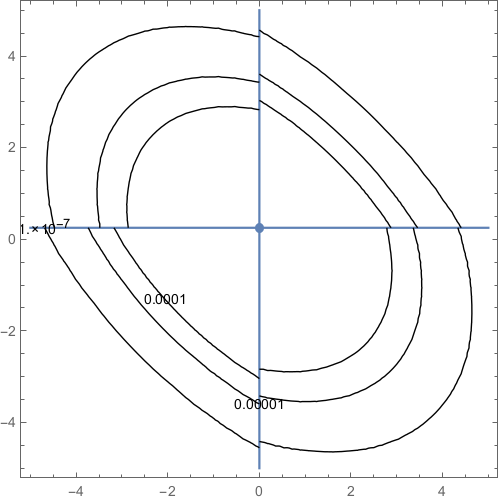}
\caption{\label{fig:density} The contour lines of the absolutely
continuous part of the distribution of $\betaL - \beta$ with respect
to 2-dimensional Lebesgue-measure, for $X'X=
\left(\protect\begin{smallmatrix} 1 & 0.5 \\
0.5 & 1 \protect\end{smallmatrix}\right)$, 
$\lambda=(0.75,0.75)'$, and $\beta=(0, -0.25)'$. Note that the blue
lines as well as the blue point also carry probability mass.}
\end{figure} 

\begin{figure}[htp]
\centering \includegraphics[width=0.49\textwidth]{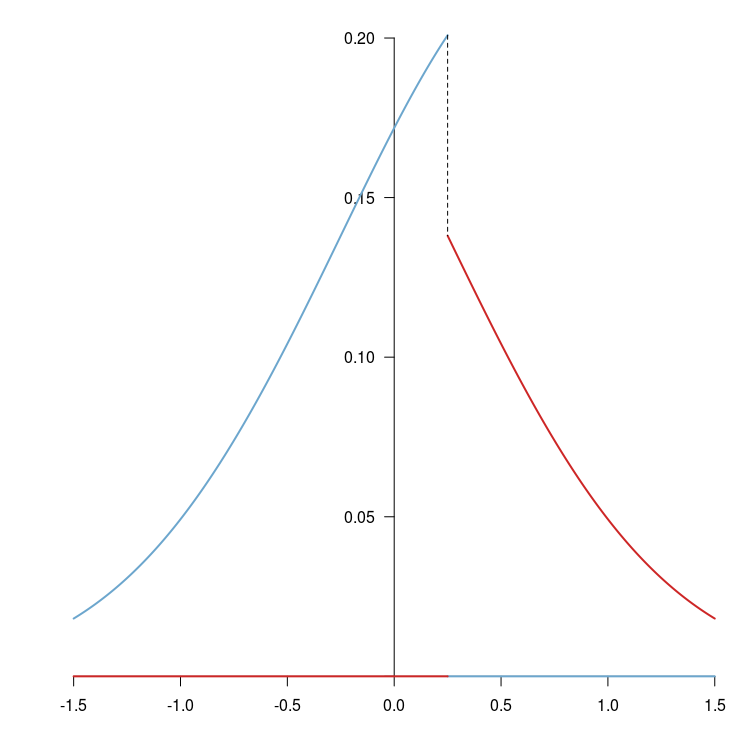}
\caption{\label{fig:conddensity} The functions $h^{(0,-1)'}$ (in blue)
and $h^{(0,1)'}$ (in red) for $X'X=
\left(\protect\begin{smallmatrix} 1 & 0.5 \\ 
0.5 & 1 \protect\end{smallmatrix}\right)$,
$\lambda=(0.75,0.75)'$ and $\beta=(0, -0.25)'$, corresponding to the
absolutely continuous part.}
\end{figure} 
  
\subsection{Shrinkage Areas}
\label{subsec:shrink}

Using the conditions for minimality from Lemma~\ref{lem:kkt}, we can
establish a direct relationship between the LS and the Lasso estimator
in the following sense. For any $b \in \R^p$, there exists a set $S(b)
\subseteq \R^p$, such that the Lasso estimator assumes the value $b$
if and only if the LS estimator lies in $S(b)$. We refer to the set
$S(b)$ as \emph{shrinkage area} since the Lasso estimator can be
viewed as a procedure that shrinks the LS estimates from the set
$S(b)$ to the point $b$. Note that by shrinkage, we mean that $\|b\|_1
\leq \|z\|_1$ for each $z \in S(b)$, but $|b_j| > |z_j|$ could hold
for certain components. The explicit form of $S(b)$ is formalized in
the following theorem.

\begin{theorem} \label{thm:shrink}
For each $b \in \R^p$ there exists a set $S(b) \subseteq \R^p$, such that 
$$
\betaL = b \Longleftrightarrow \betaLS \in S(b).
$$
Moreover, for $b \in \O^d$, the set $S(b)$ is given by
\begin{align*}
S(b) = \{z \in \R^p : & (X'Xz)_j = (X'Xb)_j + \sgn(b_j) \lambda_j \text{ for
} j \in D_\pm, \\
& |(X'X(z-b))_j| \leq \lambda_j \text{ for }  j \in D_0\}.
\end{align*}
Clearly, the sets $S(b)$ are disjoint for different $b$'s.
\end{theorem}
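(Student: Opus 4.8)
The plan is to read off the first-order conditions characterizing the Lasso solution from the proof of Theorem~\ref{thm:probs}, rewrite them in the original parametrization in terms of the candidate value $b$ and the data object $X'y$, and then invoke the full-rank assumption to replace $X'y$ by $X'X\betaLS$. This turns the minimality conditions verbatim into the membership conditions defining $S(b)$, so that almost all of the real work has already been done in Lemma~\ref{lem:dirderivs} and the proof of Theorem~\ref{thm:probs}.

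First I would recall that, by convexity of $L$ together with Lemma~\ref{lem:dirderivs}, a vector $b \in \O^d$ is the unique minimizer of $L$ if and only if the shifted vector $m = b - \beta$ satisfies the necessary and sufficient conditions \eqref{eqn:foc}. Since $W = X'\eps = X'y - X'X\beta$, substituting $m = b-\beta$ into \eqref{eqn:foc} and cancelling the common term $-(X'X\beta)_j$ on both sides yields, for $j \in D_-^+$, the equality $(X'y)_j = (X'Xb)_j + \sgn(b_j)\lambda_j$, and for $j \in D_0$ the inequality $|(X'y)_j - (X'Xb)_j| \leq \lambda_j$. These conditions depend on the data only through $X'y$ and are, taken together, equivalent to $\betaL = b$.

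Next, because $X$ has full column rank, the LS estimator satisfies $X'X\betaLS = X'y$. Substituting $X'y = X'X\betaLS$ into the two displayed conditions turns them into $(X'X\betaLS)_j = (X'Xb)_j + \sgn(b_j)\lambda_j$ for $j \in D_-^+$ and $|(X'X(\betaLS - b))_j| \leq \lambda_j$ for $j \in D_0$, which is exactly $\betaLS \in S(b)$. Reading the chain of equivalences in both directions gives the claim: the forward direction uses that $\betaL = b$ forces \eqref{eqn:foc}, while the backward direction uses that \eqref{eqn:foc} is sufficient for minimality, so that $\betaLS \in S(b)$ makes $b$ a minimizer of $L$, whence $b = \betaL$ by uniqueness.

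For disjointness I would argue again via uniqueness. The conditions defining $S(b)$ are precisely the necessary and sufficient optimality conditions for $b$ to be the Lasso solution associated with any response satisfying $X'y = X'Xz$, equivalently $\betaLS = z$. Hence, if some $z$ lay in both $S(b)$ and $S(b')$, then $b$ and $b'$ would both solve the same strictly convex problem, forcing $b = b'$. I expect the only real obstacle to be careful bookkeeping across the reparametrization $m = b - \beta$ and keeping track of the sign pattern $d = \sgn(b)$ attached to each $b$; once the necessity-and-sufficiency of \eqref{eqn:foc} is in hand, the remaining computation is routine.
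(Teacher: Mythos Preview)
Your proposal is correct and follows essentially the same route as the paper: both arguments start from the optimality conditions \eqref{eqn:foc}, substitute the identity $W = X'X(\betaLS - \beta)$ (you reach it via the intermediate step $W = X'y - X'X\beta$ together with the normal equations, the paper does it in one line), and rearrange into the defining conditions for $S(b)$. Your treatment of disjointness via uniqueness of the strictly convex problem is slightly more explicit than the paper's, which simply asserts it as ``clear''.
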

\begin{proof}
Using Lemma~\ref{lem:kkt}, we find that $\betaL = b$ holds if and only
if $X'X\betaLS \in X'Xb + \prod_{j=1}^p B_j(b_j)$, which, for $b \in
\O^d$, holds if and only if
$$
\begin{cases}
\; (X'X\betaLS)_j = (X'X\betaL)_j - \lambda_j & \text{ for } j \in D_- \\
\; |(X'X(\betaLS - \betaL))_j| \leq \lambda_j & \text{ for } j \in D_0 \\
\; (X'X\betaLS)_j = (X'X\betaL)_j + \lambda_j & \text{ for } j \in D_+, 
\end{cases}
$$
or, $\betaLS \in S(b)$, as required.

The sets are disjoint since, in case $\rk(X) = p$, all Lasso solutions
are unique. If $S(b) \cap S(\tilde b) \neq \emptyset$ holds for some
$b \neq \tilde b$, we can find $y \in \R^n$ such that $\betaLS =
(X'X)^{-1}X'y \in S(b) \cap S(\tilde b)$ implying that both $b$ and
$\tilde b$ are Lasso solutions for the given $y$, yielding a
contradiction.
\end{proof}

\begin{remark} \label{rem:singletonLOW}
Clearly, if $b \in \R^p$ satisfies $b_j \neq 0$ for all $j=1,\dots,p$, then
$S(b)$ is the singleton 
$$
S(b) = \{b + (X'X)^{-1}\tilde\lambda\},
$$ where
$\tilde\lambda_j = \sgn(b_j)\lambda_j$ for $j = 1,\dots,p$. This
implies that, in case $\betaLj \neq 0$ for all $j$, the Lasso
estimator is given by
$$
\betaL = \betaLS + (X'X)^{-1}\tilde\lambda.
$$
Note that aside from $b$, $S(b)$ depends on $X$ and $\lambda$ only. 
\end{remark}

Given Theorem~\ref{thm:shrink}, we can identify areas in which
components of the LS estimator are shrunk to zero by the Lasso. For
$p=2$, it leads to the image displayed in Figure~\ref{fig:shrink}.
Clearly, the shrinkage areas are related to the polyhedral selection
areas developed in \cite{TibshiraniTaylor12} and employed for instance
in \cite{LeeEtAl16}, but yield a different kind of information. Our
results identify the regions of the LS estimator that lead to a
particular value $b$ of the Lasso estimator. The polyhedral regions in
the above articles identify the regions of the dependent variable $y$
that correspond to a particular Lasso model with specific signs of the
active coefficients. Naturally, our regions are subsets of $\R^p$,
while the polyhedral regions are subsets of $\R^n$ (the latter ones
also allowing to interpret the Lasso fit as a projection, and not
depending on full column rank).

\begin{figure}[htp]
\centering \includegraphics[width=0.49\textwidth]{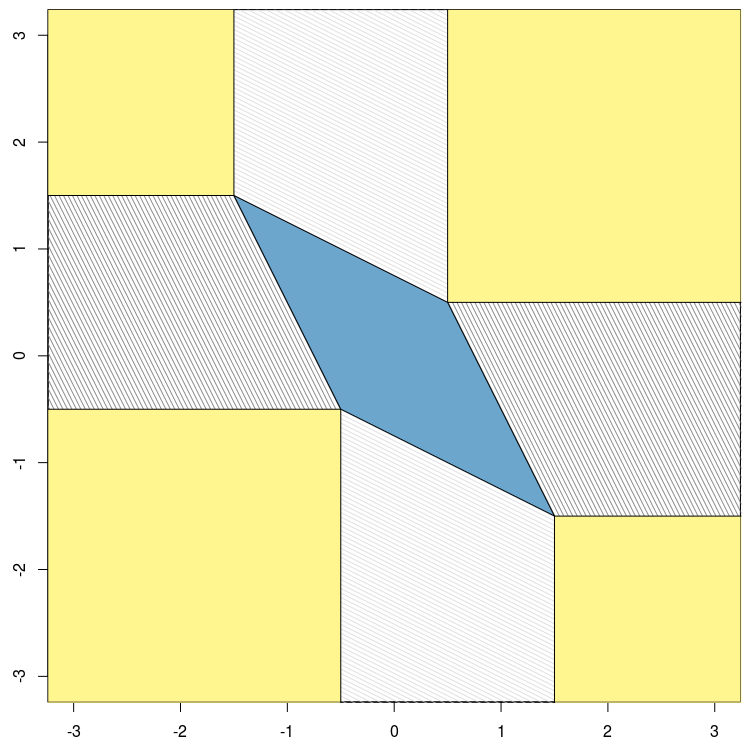}
\caption{\label{fig:shrink} The shrinkage areas from
Theorem~\ref{thm:shrink} for $p=2$. The blue parallelogram equals the
set $S(0)$. The dark gray area consists of lines parallel to the
adjacent edge of the parallelogram, where each line
equals a set $S\left(\protect\begin{smallmatrix} 0 \\ b_2 
\protect\end{smallmatrix}\right)$ 
for $b_2 \neq 0$. Analogously, the light gray area consists of lines
parallel to the adjacent edge of the parallelogram, and each of those
lines equals a set
$S\left(\protect\begin{smallmatrix} b_1 \\ 0 \protect\end{smallmatrix} \right)$ 
for $b_1 \neq 0$. The yellow areas contain all singletons
$S\left(\protect\begin{smallmatrix} b_1 \\ b_2 \protect\end{smallmatrix}\right)$,
with $b_1, b_2 \neq 0$ as described in Remark~\ref{rem:singletonLOW}.
In this example, $X'X= \left(\protect\begin{smallmatrix} 1 & 0.5 \\
0.5 & 1 \protect\end{smallmatrix}\right)$, and $\lambda=(0.75,0.75)'$.}
\end{figure} 

\section{High-Dimensional Case}
\label{sec:HIGH}

We now turn to the main case of this this article, the
high-dimensional setting where $p > n$. \emph{We make no assumptions
on the regressor matrix $X$ in this section}. Using similar arguments
as in the case $p \leq n$, we can again start by characterizing the
distribution of the Lasso, albeit in a somewhat less explicit form.
Note that we have $\rk(X) < p$ and that the true parameter is not
identified without further assumptions. We denote by $\Bzero$ the set
of all $\beta \in \R^p$ that yield the model given in
\eqref{eqn:model}, that is, $\Bzero = \{\beta \in \R^p : X\beta =
\E(y) = \mu\}$. Furthermore, it is important to note that the Lasso
solution need not be unique anymore. We give necessary and sufficient
conditions for uniqueness later in Section~\ref{subsec:unique}.

All findings in this section also hold when $p \leq n$, but more
explicit results for this case are found in Section~\ref{sec:LOW}. We
start with a high-level result on the distribution of $\betaL$, which
immediately follows from Corollary~\ref{cor:kkt}.

\begin{theorem} \label{thm:probHIGH}
For any set $B \subseteq \R^p$ and any $\beta \in \Bzero$, we have
$$
P(\argmin_{\upbeta \in \R^p} L(\upbeta) \cap B \neq \emptyset) = 
P(W \in A_\beta(B)),
$$
where $W \sim N(0,\sigma^2X'X)$, and $A_\beta(B) = \bigcup_{b \in B} A_\beta(b)$,
with $A_\beta(b) = X'X(b-\beta) + \prod_{j=1}^p B_j(b_j)$, and
$$
B_j(b_j) = 
\begin{cases} 
\{\sgn(b_j) \lambda_j\} & b_j \neq 0 \\
[-\lambda_j, \lambda_j] & b_j = 0.
\end{cases}
$$
In particular, the distribution of the estimator $\betaL$ does not depend
on the choice of $\beta \in \Bzero$.
\end{theorem}

To derive the analogue of the distribution of the estimation error in
high dimensions for a fixed $\beta \in \Bzero$, define the function
$V_\beta(u) = L(u + \beta) - L(\beta)$ given by
\begin{equation} \label{eqn:Vbeta}
V_\beta(u) = L(u+\beta) - L(\beta) = u'X'X u - 2 u'W  
+ 2\sum_{j=1}^p \lambda_{j} \left[ |u_j + \beta_j|-|\beta_j| \right],
\end{equation}
which is minimized at $\betaL - \beta$, and where $\betaL$ may be any
minimizer of $L(\upbeta)$. The high-dimensional version of
Corollary~\ref{cor:prob} can now be formulated as

\begin{theorem} \label{cor:probHIGH}
For any set $M \subseteq \R^p$ and any $\beta \in \Bzero$, we have
$$
P(\argmin_{u \in \R^p} V_\beta(u) \cap M \neq \emptyset) = 
P(W \in \bar A_\beta(M)),
$$
where $W \sim N(0,\sigma^2X'X)$ and $\bar A_\beta(M) = \bigcup_{m \in
M} \bar A_\beta(m)$ with $\bar A_\beta(m) = X'Xm + \prod_{j=1}^p
\bar B_{\beta,j}(m_j)$ and
$$
\bar B_{\beta,j}(m_j) =
\begin{cases} 
\{\sgn(m_j + \beta_j) \lambda_j\} & m_j + \beta_j \neq 0 \\
[-\lambda_j, \lambda_j] & m_j + \beta_j = 0.
\end{cases}
$$
\end{theorem}

\begin{proof}
As stated above, $m \in \R^p$ is a minimizer of $V_\beta$ if and only
if $m + \beta$ is a minimizer of $L(\upbeta)$. Corollary~\ref{cor:kkt}
then yields
$$
m \in \argmin_{u \in \R^p} V_\beta(u) \iff W \in A_\beta(m+\beta) \iff
W \in \bar A_\beta(m).
$$
\end{proof}

\begin{remark} \label{rem:normHIGH}
Note that, just as for the low-dimensional case discussed in
Remark~\ref{rem:normLOW}, the statements in Theorem~\ref{thm:probHIGH}
and Corollary~\ref{cor:probHIGH} do not hinge on the normal
distribution of $W = X'\eps$. In fact, the both results equally hold
for arbitrary distributions of $W$.
\end{remark}

While the distribution of $\betaL - \beta$ depends on the choice of
$\beta \in \Bzero$, the distribution of $\betaL$ does not, as it is
determined by $y \sim N(\mu,\sigma^2 I_n)$. This is further formalized
in the following corollary. As mentioned before, $\betaL$ need not be
unique. Also remember that $\betaL$ itself minimizes the function
$L(\upbeta)$ defined in \eqref{eqn:lassodef}.

As the random variable $W = X'\eps$ has singular covariance matrix,
some care needs to be taken when computing the probability from
Corollary~\ref{cor:probHIGH} through the appropriate integral of the
corresponding density function.

\begin{corollary} \label{cor:intrepHIGH}
Let the columns of $U$ form a basis of $\col(X')$. The probability
that a Lasso solution lies in the set $B \subseteq \R^p$ can be
written as
$$
P(\argmin_{\upbeta \in \R^p} L(\upbeta) \cap B \neq \emptyset) =
\ind\{\col(X')\cap A_\beta(B) \neq \emptyset\} \int_{U'A_\beta(B)}
\phi_{(0,\sigma^2 U'X'X U)}(w)dw.
$$
\end{corollary}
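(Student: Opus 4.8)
The plan is to deduce the statement from Corollary~\ref{cor:probHIGH} and then deal with the fact that $W\sim N(0,\sigma^2X'X)$ is supported on a proper subspace. By Corollary~\ref{cor:probHIGH} the probability in question equals $P(W\in A_0(B))$. First I would record the single structural fact that drives everything: since $W=X'\eps$, we have $W\in\col(X')$ with probability one, indeed \emph{always}. Hence $\{W\in A_0(B)\}=\{W\in A_0(B)\cap\col(X')\}$, and in particular $P(W\in A_0(B))=0$ as soon as $A_0(B)\cap\col(X')=\emptyset$. This already accounts for the factor $\ind\{\col(X')\cap A_0(B)\neq\emptyset\}$.

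When the intersection is nonempty, I would transport the degenerate law of $W$ to $\R^{\rk(X)}$ through the map $w\mapsto U'w$. Two linear-algebra checks are needed. First, $U'$ restricts to a bijection from $\col(X')$ onto $\R^{\rk(X)}$: it is injective because $\ker(U')=\col(X')^\perp$ meets $\col(X')$ only in the origin, and it is onto by a dimension count since $\dim\col(X')=\rk(X)$. Second, $U'X'XU$ is nonsingular, for if $U'X'XUa=0$ then $\|XUa\|=0$, so $Ua\in\ker(X)\cap\col(X')=\{0\}$ and thus $a=0$. Consequently $U'W\sim N(0,\sigma^2U'X'XU)$ is a genuine (nondegenerate) Gaussian with Lebesgue density $\phi_{(0,\sigma^2U'X'XU)}$. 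Using the injectivity of $U'$ on $\col(X')$, for $W\in\col(X')$ one has $W\in A_0(B)\iff U'W\in U'\!\left(A_0(B)\cap\col(X')\right)$, and integrating the above density over this image yields $P(W\in A_0(B))$.

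The step I expect to be the crux is the correct identification of the integration domain. What the computation naturally produces is the image under $U'$ of the \emph{intersection} $A_0(B)\cap\col(X')$, and this is what the symbol $U'A_0(B)$ in the statement must be taken to mean. One cannot simply project the whole of $A_0(B)$: each piece $A_0(b)=X'X(b-\beta)+\prod_j B_j(b_j)$ carries a box $\prod_j B_j(b_j)$ that generically protrudes out of $\col(X')$, so $U'A_0(B)$ is strictly larger in Lebesgue measure than $U'(A_0(B)\cap\col(X'))$ and would attribute mass to directions transverse to $\col(X')$ that $W$ never visits. Thus the only genuinely non-routine part is checking that, once the support is taken into account, no probability mass is gained or lost, i.e. that the indicator together with the integral over $U'(A_0(B)\cap\col(X'))$ reproduces $P(W\in A_0(B))$ exactly; the remaining manipulations are routine.
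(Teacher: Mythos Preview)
Your argument follows the same route as the paper: reduce to $P(W\in A_0(B))$ via Corollary~\ref{cor:probHIGH}, exploit that $W=X'\eps\in\col(X')$ surely, and push forward through $U'$ to the nondegenerate Gaussian $U'W\sim N(0,\sigma^2U'X'XU)$. The paper carries this out by augmenting $U$ with a matrix $N$ whose columns span $\col(X')^\perp$, noting $N'W=0$, and writing $W\in A_0(B)\iff U'W\in U'A_0(B)$ together with $0\in N'A_0(B)$; you instead argue directly from the injectivity of $U'$ on $\col(X')$, which is equivalent and slightly cleaner.

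Your reservation about the integration domain is justified and is in fact sharper than the paper's own treatment. The paper's split into the two separate conditions ``$U'W\in U'A_0(B)$'' and ``$0\in N'A_0(B)$'' tacitly allows different witnesses $a\in A_0(B)$ for each, whereas the honest equivalence requires a single $a$ with $U'W=U'a$ and $N'a=0$, i.e.\ $a\in A_0(B)\cap\col(X')$. So the integral should indeed be taken over $U'(A_0(B)\cap\col(X'))$, and the paper's expression $U'A_0(B)$ has to be read in that sense; you have put your finger precisely on the step the paper glosses over.
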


\begin{proof}
Note that $U'W \sim N(0,\sigma^2U'X'XU)$ and that $U'X'XU$ is
invertible. Let $N$ be a matrix whose columns form a basis of
$\col(X')^\perp$, so that $N'W$ has covariance matrix $\sigma^2N'X'XN
= 0$, yielding $N'W = 0$ almost surely. We therefore have
\begin{align*}
W \in A_\beta(B) & \iff (U,N)'W \in (U,N)'A_\beta(B) \\
& \iff U'W \in U'A_\beta(B) \text{ and } 0 \in N'A_\beta(B) \\
& \iff U'W \in U'A_\beta(B) \text{ and } \col(X') \cap A_\beta(B) \neq \emptyset,
\end{align*}
which proves the claim.
\end{proof}

\subsection{Selection Regions and Model Selection Properties}
\label{subsec:shrinkHIGH}

In the low-dimensional case, Theorem~\ref{thm:shrink} gives what we
call shrinkage areas of the Lasso with respect to the LS estimator. As
the latter is never uniquely defined in the high-dimensional case, we
instead look at the object $X'y$ and and consider so-called
\emph{selection regions} with respect to this quantity: for any $b \in
\R^p$, we provide a set $T(b)$ such that a Lasso solution is equal to
$b$ if and only if $X'y$ lies in the set $T(b)$. The corresponding
result turns out to be a restatement of Lemma~\ref{lem:kkt}, which we
list again in the following for the sake of completeness.

\begin{theorem} \label{thm:shrinkHIGH} 
For each $b \in \R^p$ there exists a set $T(b) \subseteq \R^p$
such that
$$
b \in \argmin_{\upbeta \in \R^p} L(\upbeta) \iff X'y \in T(b).
$$
Moreover, $T(b)$ is given by
$$
T(b) = X'Xb + \prod_{j=1}^p B_j(b_j)
$$
with
$$
B_j(b_j) = 
\begin{cases} 
\{\sgn(b_j) \lambda_j\} & b_j \neq 0 \\
[-\lambda_j, \lambda_j] & b_j = 0.
\end{cases}
$$
\end{theorem}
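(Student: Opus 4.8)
The plan is to read the result straight off the event-level equivalence already established in the proof of Corollary~\ref{cor:probHIGH}, and to convert the condition on $W$ into a condition on the data object $X'y$ by a single affine substitution. No new optimization analysis is needed: Corollary~\ref{cor:probHIGH} has already done the work of turning ``$b$ is a minimizer'' into a membership statement.

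First I would recall that for the fixed $\beta \in \Bzero$ used throughout this section, $W = X'\eps = X'(y - X\beta) = X'y - X'X\beta$. The key fact to invoke is equation~\eqref{eq:Asets}, which asserts that $b \in \argmin_{\upbeta \in \R^p} L(\upbeta)$ if and only if $W \in A_0(b) = X'X(b-\beta) + \prod_{j=1}^p B_j(b_j)$, with the sets $B_j(b_j)$ exactly as in the theorem statement. This already supplies a characterizing set and its explicit product shape; what remains is only to rewrite it in terms of the always-defined quantity $X'y$. I would then substitute $W = X'y - X'X\beta$ into the inclusion $W \in A_0(b)$. Since translating both a point and a set by the common vector $X'X\beta$ preserves membership, the condition $X'y - X'X\beta \in X'X(b-\beta) + \prod_{j=1}^p B_j(b_j)$ is equivalent to $X'y \in X'Xb + \prod_{j=1}^p B_j(b_j)$, the $-X'X\beta$ summand in $A_0(b)$ cancelling against the shift. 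Defining $\bar S(b) := X'Xb + \prod_{j=1}^p B_j(b_j)$ then yields the claimed equivalence.

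Because the argument is essentially a change of variables, there is no genuine obstacle; the only point worth checking is that the substitution is legitimate and does not secretly depend on the chosen representative $\beta$. This is immediate: $X'y$ is a deterministic function of the data alone, and the combination $W + X'X\beta = X'y$ is manifestly free of $\beta$, consistent with the $\beta$-independence of $A_0$ already noted in Corollary~\ref{cor:probHIGH}. It may also be worth remarking, as in Remark~\ref{rem:singleton} for the low-dimensional case, that aside from $b$ the set $\bar S(b)$ depends only on $X$ and $\lambda$, and that the sets $\bar S(b)$ are pairwise disjoint for distinct $b$ since each $b$ is recoverable from the membership condition.
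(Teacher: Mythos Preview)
Your proof of the theorem itself is correct and essentially identical to the paper's: both invoke the event-level equivalence \eqref{eq:Asets} established in the proof of Corollary~\ref{cor:probHIGH} and substitute $W = X'y - X'X\beta$ so that the $\beta$-dependent shift cancels, yielding $\bar S(b) = X'Xb + \prod_{j=1}^p B_j(b_j)$.

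However, your closing remark is wrong and should be deleted: in the high-dimensional setting the sets $\bar S(b)$ are \emph{not} in general pairwise disjoint, and $b$ is \emph{not} recoverable from the membership condition $X'y \in \bar S(b)$. This is precisely the manifestation of non-uniqueness of the Lasso when $\rk(X) < p$; the paper in fact devotes the Remark immediately following this theorem to exactly this point, observing that $\bar S(b) \cap \bar S(b') \neq \emptyset$ can occur whenever $b - b' \in \ker(X)$ and the sign patterns are compatible. Your intuition is carried over from Theorem~\ref{thm:shrink} in the low-dimensional case, where disjointness holds because $X'X$ is invertible and the map $b \mapsto X'Xb$ is injective; that argument fails here.
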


\begin{remark} \label{rem:singletonHIGH}
Analogously to the low-dimensional case, the sets $T(b)$ are
singletons if $b \in \R^p$ satisfies $b_j \neq 0$ for all
$j=1,\dots,p$:
$$
T(b) = \{X'Xb + \tilde\lambda\},
$$ 
where $\tilde\lambda_j = \sgn(b_j)\lambda_j$ for $j = 1,\dots,p$.
Also, aside from $b$, the sets $T(b)$ depend on $X$ and $\lambda$
only.
\end{remark}

Inspecting the sets $T(b)$ from Theorem~\ref{thm:shrinkHIGH} more
closely, we see that they are, in general, not disjoint for different
values of $b \in \R^p$. This illustrates the fact that, in contrast to
the low-dimensional case, the Lasso solution need not be unique in
high dimensions anymore. Indeed, we can have $T(b) \cap T(b') \neq
\emptyset$, as long as $b - b' \in \ker(X)$ and
$\{\sgn(b_j),\sgn(b_j')\} \neq \{-1,1\}$ for all $j$. This also makes
apparent that $b$ and $b'$ may be Lasso solutions not corresponding to
the same model, which has been noted by \cite{Tibshirani13} for the
case of $\lambda_1 = \dots  = \lambda_p > 0$. We get deeper into the
issue of (non-)uniqueness in Section~\ref{subsec:unique}.

Theorem~\ref{thm:shrinkHIGH} also sheds some light on which models $\M
\subseteq \{1,\dots,p\}$ may in fact be chosen by the Lasso estimator,
where the Lasso model is given by $\{j : \betaLj \neq 0\}$. We find
that some models will, in fact, never be selected by the Lasso. This
is illustrated in Figure~\ref{fig:shrinkHIGH} below, where the Lasso
always sets the first component to zero, independently of $y$. This
leads to the question on how to determine whether a particular model
$\M$ may or may not be chosen.

\begin{figure}[htp]
\centering \includegraphics[width=0.49\textwidth]{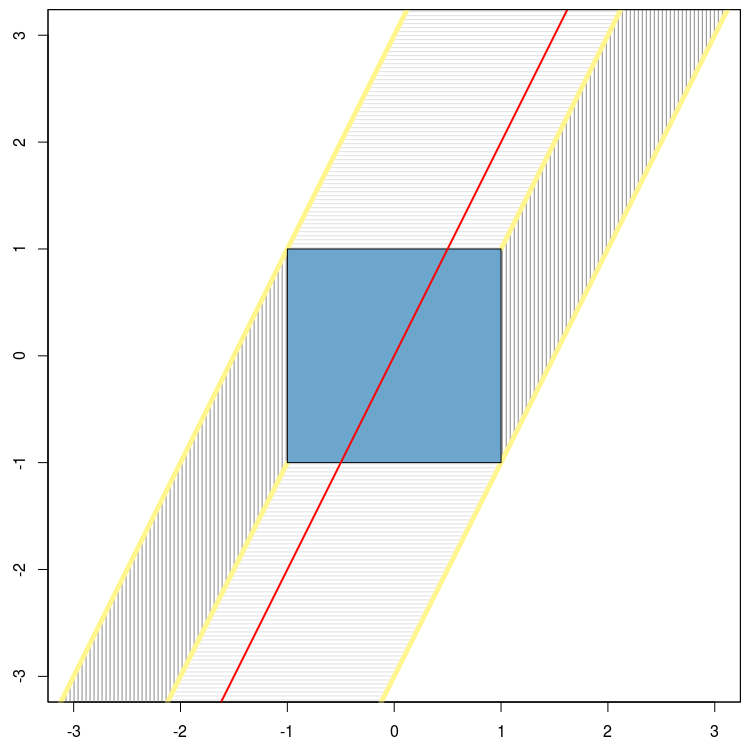}
\caption{\label{fig:shrinkHIGH} The selection regions with respect to
$X'y$ from Theorem~\ref{thm:shrinkHIGH}, with $X = (1 , 2)$ and
$\lambda_1 = \lambda_2 = 1$ from Example~\ref{ex:n1p2}. Displayed in
red is $\col(X')$, the area on which the probability mass of $X'y$ is
concentrated. The set
$T\left(\protect\begin{smallmatrix} 0 \\ 0\protect\end{smallmatrix}\right)$ 
is displayed in blue, while the parallel light gray lines represent
the sets
$T\left(\protect\begin{smallmatrix} 0 \\ b_2\protect\end{smallmatrix}\right)$
with $b_2 \neq 0$, and the parallel dark gray lines are the sets
$T\left(\protect\begin{smallmatrix} b_1 \\ 0\protect\end{smallmatrix}\right)$
with $b_1 \neq 0$. The yellow lines consist of the singletons
$T\left(\protect\begin{smallmatrix} b_1 \\ b_2\protect\end{smallmatrix}\right)$ 
with $b_1, b_2\neq 0$. Note that the red line does not intersect any of the sets
$T\left(\protect\begin{smallmatrix} b_1 \\ 0\protect\end{smallmatrix}\right)$ or
$T\left(\protect\begin{smallmatrix} b_1 \\ b_2\protect\end{smallmatrix}\right)$ 
with $b_1, b_2 \neq 0$.}
\end{figure}

Along these lines, define $\BM = \{b \in \R^p: b_j \neq 0 \text{ if
and only if } j \in \M\}$. Then there exists a $y \in \R^n$ such that
a corresponding corresponding Lasso solution chooses $\M$ if and only
if there exists $y \in \R^n$ such that $X'y \in T(\BM)$. In other
words, this is the case if and only if $\col(X') \cap T(\BM) \neq
\emptyset$ with $T(\BM) = \bigcup_{b \in \BM} T(b)$. Looking at the
definition of $T(b)$ in Theorem~\ref{thm:shrinkHIGH}, and noting that
$X'Xb \in \col(X')$, we can deduce the following corollary.

\begin{corollary} \label{cor:modelselection}
Let $X \in \R^{n \times p}$ and $\lambda \in \Rgeq^p$ be given. There
exist $y \in \R^n$ such that a corresponding Lasso solution selects
model $\M \subseteq \{1,\dots,p\}$ if and only if
$$
\col(X') \cap \B_\M \neq \emptyset,
$$
where 
$$
\B_\M = \prod_{j=1}^p \begin{cases} 
\{-\lambda_j, \lambda_j\} & j \in \M \\ 
[-\lambda_j, \lambda_j] & j \notin \M,
\end{cases}
$$
which satisfies $\B_{\tilde\M} \subseteq \B_\M$ for $\M \subseteq
\tilde\M$.
\end{corollary}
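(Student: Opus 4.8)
The plan is to read the statement off directly from Theorem~\ref{thm:shrinkHIGH}, combined with the elementary observation that, as $y$ ranges over $\R^n$, the vector $X'y$ ranges over exactly $\col(X')$. I would begin by unwinding the phrase ``selects model $\M$'': a Lasso solution $b$ selects $\M$ precisely when $b \in \BM$, so there exists some $y$ for which $\M$ is selected if and only if there exist $b \in \BM$ and $y \in \R^n$ with $b \in \argmin_{\upbeta} L(\upbeta)$. By Theorem~\ref{thm:shrinkHIGH} this is equivalent to $X'y \in \bar S(b)$ for some $b \in \BM$, and since $\{X'y : y \in \R^n\} = \col(X')$, the whole condition collapses to $\col(X') \cap \bar S(\BM) \neq \emptyset$, where $\bar S(\BM) = \bigcup_{b \in \BM} \bar S(b)$.

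The second step is to replace $\bar S(\BM)$ by $\B_\M$. Here I would use that $\col(X')$ is a linear subspace and that $X'Xb \in \col(X')$: shifting a set by a vector already lying in the subspace does not change whether it meets the subspace, so $\col(X') \cap (X'Xb + \prod_{j=1}^p B_j(b_j)) \neq \emptyset$ if and only if $\col(X') \cap \prod_{j=1}^p B_j(b_j) \neq \emptyset$. Taking the union over $b \in \BM$ and computing $\bigcup_{b \in \BM} \prod_{j} B_j(b_j)$ coordinatewise — the factor $[-\lambda_j,\lambda_j]$ for $j \notin \M$, and the two sign choices $\{\sgn(b_j)\lambda_j\}$ sweeping out $\{-\lambda_j,\lambda_j\}$ for $j \in \M$ — yields exactly the product set $\B_\M$. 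This gives the equivalence $\col(X') \cap \bar S(\BM) \neq \emptyset \iff \col(X') \cap \B_\M \neq \emptyset$, and hence the main assertion.

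Finally, for the monotonicity claim $\B_{\tilde\M} \subseteq \B_\M$ whenever $\M \subseteq \tilde\M$, I would simply compare the two product sets factor by factor: for $j \in \M$ or $j \notin \tilde\M$ the factors coincide, while for $j \in \tilde\M \setminus \M$ the factor of $\B_{\tilde\M}$ is $\{-\lambda_j,\lambda_j\}$ and that of $\B_\M$ is $[-\lambda_j,\lambda_j] \supseteq \{-\lambda_j,\lambda_j\}$; the inclusion of Cartesian products then follows.

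I do not expect a genuine obstacle here, as the corollary is essentially a repackaging of Theorem~\ref{thm:shrinkHIGH}. The one place that deserves care is the set-algebra in the second step: one must justify both that $X'y$ surjects onto $\col(X')$, so that no observation vector is lost or spuriously added, and that the union of the Cartesian products equals the Cartesian product with the swept-out factors, which relies on the factors for $j \in \M$ being chosen independently across coordinates as $b$ ranges over $\BM$.
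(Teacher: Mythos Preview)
Your proposal is correct and follows essentially the same route as the paper: the paper derives the corollary in the discussion immediately preceding it, using Theorem~\ref{thm:shrinkHIGH}, the fact that $X'y$ ranges over $\col(X')$, the shift-invariance $X'Xb \in \col(X')$, and the same coordinatewise computation of $\bigcup_{b \in \BM} \prod_j B_j(b_j)$. If anything, you are slightly more explicit about the surjectivity of $y \mapsto X'y$ onto $\col(X')$ (needed for the ``if'' direction) and about why the union of products equals the product of unions, which the paper leaves implicit.
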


A model that may be chosen by the Lasso is called \emph{accessible} in
\cite{SepehriHarris17}. (This reference who also provides a condition
for when this is the case. The difference is that uses geometric
considerations in $\R^n$ under a uniqueness assumption, whereas our
approach operates in $\R^p$ with no assumptions on $X$.)

The sets $\B_\M$ are made up of the faces of the $\lambda$-cube. If
$\M_0 = \emptyset$, $\B_\emptyset$ is the $p$-dimensional
$\lambda$-box, $B_{\{j\}}$ is the union of two opposite facets of the
$\lambda$-box, and for $1 < |\M| < p$, $\B_\M$ is a union of
$(p-|\M|)$-dimensional faces of the $\lambda$-box. Finally,
$\B_{\{1,\dots,p\}}$ simply contains the corners of the $\lambda$-box.
These sets are illustrated in Figure~\ref{fig:BMsets} below.

\begin{figure}[htp]
\centering \includegraphics[width=0.49\textwidth]{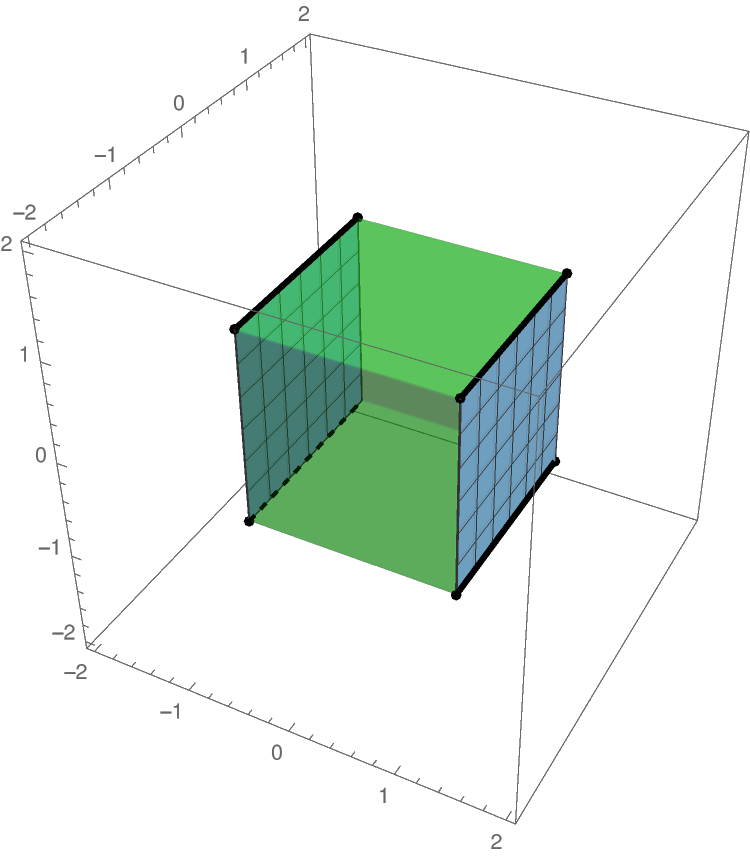}
\caption{\label{fig:BMsets} Illustration of some of the sets $\B_\M$
for $p=3$ and $\lambda_1 = \lambda_2 = \lambda_3 =1$. The set
$\B_{\{1,2,3\}}$ is given by the eight corners of the unit cube,
whereas $\B_{\{1,3\}}$ contains the four parallel edges shown.
$\B_{\{1\}}$ is the union of the two parallel 2-dimensional facets
highlighted by a grid. The sets $\B_{\{1,2\}}$ and $\B_{\{2,3\}}$ are
not depicted, but contain the four parallel vertical and horizontal
edges of the cube, respectively. The sets $\B_{\{2\}}$ and
$\B_{\{3\}}$, which are not shown either, each consist of the
remaining two parallel vertical and horizontal facets.}
\end{figure} 

For partial tuning with $\M_0 \neq \emptyset$, $\B_\emptyset$ is $(p -
|\M_0|)$-dimensional and we have $\B_\M \subseteq \B_{\M_0}$ for all
$\M \subseteq \{1,\dots,p\}$ as well as
$$
\{0\} \subseteq \col(X') \cap \B_{\M_0} \neq \emptyset,
$$
so that, not surprisingly, there always exist $y$ such that the
non-penalized components will be part of the model chosen by the Lasso
solution.

\begin{example} \label{ex:n1p2}
Suppose $X = (1,2)$, so that $n=1$, $p=2$ and let $\lambda_1 =
\lambda_2 = \lambda_3$ (uniform tuning). As can be learned from
Figure~\ref{fig:shrinkHIGH},
$$
\col(X') \cap \B_{\{1\}} = \emptyset
$$
for all $\bar\lambda > 0$, so that by
Corollary~\ref{cor:modelselection}, $\betaLx{1} = 0$ for any value of
$y$, independent of $\Bzero$ and $\sigma^2$. The distribution of the
remaining component $\betaLx{2}$ is now given by a soft-thresholding
rule (also see the paragraph following Theorem~\ref{thm:prob}).
\end{example}

\begin{example} \label{ex:n2p3}
To look at a more complex example, suppose now that
$$
X = \left(\begin{matrix} 
2 & 0 & 1\\
1 & 2 & 0 
\end{matrix} 
\right),
$$ 
so that $n=2$, $p=3$. Let $\lambda_1 = \lambda_2 = \lambda_3 =
\bar\lambda$ (uniform tuning). We have
$$
\col(X') \cap \B_{\{3\}} = \emptyset
$$
for all $\bar\lambda > 0$, so that by Corollary~\ref{cor:modelselection}
$$
P(\betaLx{3} = 0) = 1.
$$ 
To say something about the distribution of the remaining components,
note that the estimator is equivalent to the low-dimensional procedure
using the matrix $\tilde X = X_{\{1,2\}}$, which contains the first
and second regressor only. Let $\tilde\beta \in \R^2$ be such that
$\tilde X\tilde\beta = X\beta$, where $X\beta = E(y)$, and let
$$
V = (V_1,V_2)' \sim N(\tilde\beta - (\tilde X'\tilde
X)^{-1}\tilde\lambda,\sigma^2(\tilde X'\tilde X)^{-1}),
$$ 
where $\tilde\lambda$ will be specified below. We can now use
Theorem~\ref{thm:prob} to find the following. The absolutely
continuous parts of the distribution of $(\betaLx{1},\betaLx{2})'$ can
be determined by
\begin{align*}
P(\betaLx{1} \leq & z_1, \betaLx{2} \leq z_2, \betaLx{3} = 0) \\
& = \int_{-\infty}^{z_2 - \tilde\beta_2} \int_{-\infty}^{z_1 - \tilde\beta_1} 
\phi_{(0,\sigma^2 \tilde X'\tilde X)} (\tilde X'\tilde Xm + \tilde\lambda) \, |\tilde X'\tilde X|
\, dm_1 dm_2 \\
& = P(V_1 \leq z_1, V_2 \leq z_2)
\end{align*}
for $z_1, z_2 < 0$ and $\tilde\lambda = (-\bar\lambda,-\bar\lambda)'$.
Analogously, we get
$$
P(\betaLx{1} \geq z_1, \betaLx{2} \geq z_2, \betaLx{3} = 0) = P(V_1 \geq z_1, V_2 \geq z_2)
$$
for $z_1, z_2 > 0$ and $\tilde\lambda = (\bar\lambda,\bar\lambda)'$. Moreover, 
$$
P(\betaLx{1} \leq z_1, \betaLx{2} \geq z_2, \betaLx{3} = 0) = P(V_1 \leq z_1, V_2 \geq z_2)
$$
for $z_1 < 0, z_2 > 0$ and $\tilde\lambda = (-\bar\lambda,\bar\lambda)'$, as well as 
$$
P(\betaLx{1} \geq z_1, \betaLx{2} \leq z_2, \betaLx{3} = 0) = P(V_1 \geq z_1, V_2 \leq z_2)
$$
for $z_1 > 0, z_2 < 0$ and $\tilde\lambda = (\bar\lambda,-\bar\lambda)'$. 

This shows that the absolutely continuous parts of the estimator
follow a normal distribution with the same covariance matrix as the LS
estimator and a shift in expectation that depends on the regressor
matrix as well as the tuning parameters. These findings are in line
with Remark~\ref{rem:singletonLOW}.

The pointmass part of $(\betaLx{1},\betaLx{2})'$ at $(0,0)'$ has weight
$$
P(\betaLx{1} = \betaLx{2} = \betaLx{3} = 0) 
= \int_{-\bar\lambda}^{\bar\lambda} \int_{-\bar\lambda}^{\bar\lambda} 
\phi_{(\tilde X'\tilde X\tilde\beta,\sigma^2 \tilde X'\tilde X)} (w) dw_1 dw_2.
$$
For the remaining ``mixed'' terms, let 
$$
\tilde X'\tilde X = \left(\begin{matrix} a & b \\ b & c \end{matrix}\right)
$$
and 
$$
Z = (Z_1,Z_2)' \sim N(\mu,\sigma^2 \Sigma) \text{ with } 
\Sigma =  \left(\begin{matrix} \sigma_1^2 & 0 \\ 0 & \sigma_2^2\end{matrix}\right).
$$
After some calculations, it can be shown that 
\begin{align*}
P(\betaLx{1} & = 0, \betaLx{2} \leq z_2, \betaLx{3} = 0) \\
& = \int_{-\infty}^{z_2 - \tilde\beta_2} \int_{-\tilde\lambda}^{\tilde\lambda} 
\phi_{(0,\sigma^2 \tilde X'\tilde X)} (\tilde X'\tilde X(-\tilde\beta_1,m_2)' + (s_1,-\bar\lambda)')
\, c \, ds_1 dm_2 \\
& = P(- \bar\lambda \leq Z_1 \leq \bar\lambda, Z_2 \leq z_2) 
= P(- \bar\lambda \leq Z_1 \leq \bar\lambda)P(Z_2 \leq z_2)
\end{align*}
for $z_2 < 0$ and $\mu_1 = (|\tilde X'\tilde X|\tilde\beta_1 -
b\bar\lambda)/c$, $\sigma_1^2 = |\tilde X'\tilde X|/c$, $\mu_2 =
(b\tilde\beta_1 + \bar\lambda)/c + \beta_2$, and $\sigma_2^2 = 1/c$.

We get an analogous expression for $P(\betaLx{1} = 0, \betaLx{2} \geq
z_2, \betaLx{3} = 0)$ for $z_2 > 0$, but with $\bar\lambda$ replaced
by $-\bar\lambda$ in $\mu_1$ and $\mu_2$. Moreover,
$$
P(\betaLx{1} \leq z_1, \betaLx{2} = \betaLx{3} = 0) = 
P(Z_1 \leq z_1)P(-\bar\lambda \leq Z_2 \leq \bar\lambda)
$$
for $z_1 < 0$ and $\mu_1 = \tilde\beta_1 + (b\tilde\beta_2 +
\bar\lambda)/a$, $\sigma_1^2 = 1/a$, $\mu_2 = (|\tilde X'\tilde
X|\tilde\beta_2 - b\bar\lambda)/a$, $\sigma_2^2 = |\tilde X'\tilde
X|/a$.

Finally, we get an analogous expression for $P(\betaLx{1} \geq z_1,
\betaLx{2} = \betaLx{3} = 0)$ for $z_1 > 0$, but with $\bar\lambda$
replaced by $-\bar\lambda$ in $\mu_1$ and $\mu_2$. It might be
interesting to note that in this example, the probabilities for the
distribution that are made up of both a pointmass part and absolutely
continuous parts can be represented by independent (normal) random
variables.
\end{example}

In both examples above, the distribution of $\betaL$ is the same as
the one of a Lasso estimator in a smaller model. This fact is, of
course, only valid for the specific forms of $X$ and $\lambda$
considered here. The models considered by the Lasso do not depend on
$\beta$ and $\eps$ in the sense that certain values of $X$ and
$\lambda$ may immediately rule out certain models, completely
independently of $y$. (The choice between the accessible models does,
of course, very much depend on $\beta$ and $\eps$.)

Sparked by Examples~\ref{ex:n1p2} and \ref{ex:n2p3}, this suggests
that in the high-dimensional setting, model selection by the Lasso
estimator may possibly not be a purely data-driven procedure insofar
as there is a \emph{structural model} or \emph{structural set}
$\Mstruct \subseteq \{1,\dots,p\}$, determined by $X$ and $\lambda$
only, that satisfies $\betaLj = 0$ for any $j \notin \Mstruct$
\emph{for all observations} $y \in \R^n$. In particular, the true
parameter $\beta \in \Bzero$, as well as the distribution of $\eps$ do
not have any influence on this set. In other words, some models are
never considered by the model selection procedure, completely
independently of the data vector $y$. Put yet differently again, for a
given regressor matrix $X$, one can restrict or choose this class of
models by choice of $\lambda$.

Given all the considerations above, one might ask whether such a
structural model $\Mstruct$ always satisfies $|\Mstruct| \leq n$ under
certain conditions. Clearly, uniqueness would be a meaningful
requirement in this context, as then all Lasso solutions will choose
models of cardinality of at most $n$, as has been shown in
\cite{Tibshirani13}\footnote{Note that this fact alone does not imply
that the structural set has cardinality of at most $n$ since the
active sets may certainly vary over $y$.}. In that case, the Lasso
estimator would be equivalent to a low-dimensional Lasso procedure,
restricted to this structural model $\Mstruct$, and we could employ
results from low-dimensional settings also for inference in
high-dimensional models, such as \cite{EwaldSchneider18} for
constructing confidence regions.


In Examples~\ref{ex:n1p2} and \ref{ex:n2p3}, the Lasso solutions are
always unique. It is not difficult, however, to construct an example
where the solutions are not unique anymore.

\begin{example} \label{ex:n1p2NONunique}
Again, take the model from Example~\ref{ex:n1p2} with $X=(1,2)$. This
time, choose $\lambda=(1,2)'$ (non-uniform tuning). It can easily be
seen using Theorem~\ref{thm:probHIGH} that for each $y < -1$,
$$
\betaL = \left(\begin{smallmatrix} y+1 \\0 \end{smallmatrix}\right), \;
\betaL = \left(\begin{smallmatrix} 0 \\ \frac{y+1}{2} \end{smallmatrix}\right), \;
\text{ and } \;
\betaL = \left(\begin{smallmatrix} y+1-2c \\ c \end{smallmatrix}\right) \; 
\text{ for } \; (y+1)/2 < c < 0
$$
all are Lasso solutions for the same value of $y$. Similarly, for $y > 1$,
$$
\betaL = \left(\begin{smallmatrix} y-1 \\0 \end{smallmatrix}\right), \;
\betaL = \left(\begin{smallmatrix} 0 \\ \frac{y-1}{2} \end{smallmatrix}\right), \;
\text{ and } \;
\betaL = \left(\begin{smallmatrix} y+1-2c \\ c \end{smallmatrix}\right) \; 
\text{ for } \; 0 < c < (y+1)/2
$$
all are Lasso solutions for the same value of $y$. (Note that $\betaL
= 0$ for all $y$ with $|y| \leq 1$.) The corresponding selection
regions are illustrated in Figure~\ref{fig:shrinkHIGH2}.

\end{example}
Example~\ref{ex:n1p2NONunique} shows an already known property of the
Lasso from another perspective: The solution to the Lasso problem is,
in general, not unique. Moreover, if the solution is not unique, then,
by convexity of the problem, there exists an uncountable set of
solutions\footnote{This fact has been pointed out by
\cite{Tibshirani13} in Lemma 1 for the case of uniform tuning.}. The
example moreover shows that the set of $y$ which yield non-unique
Lasso solutions is not a null set. In fact, in this example, it occurs
with probability $2\Phi(-1)$. 

\begin{figure}[htp]
\centering \includegraphics[width=0.47\textwidth]{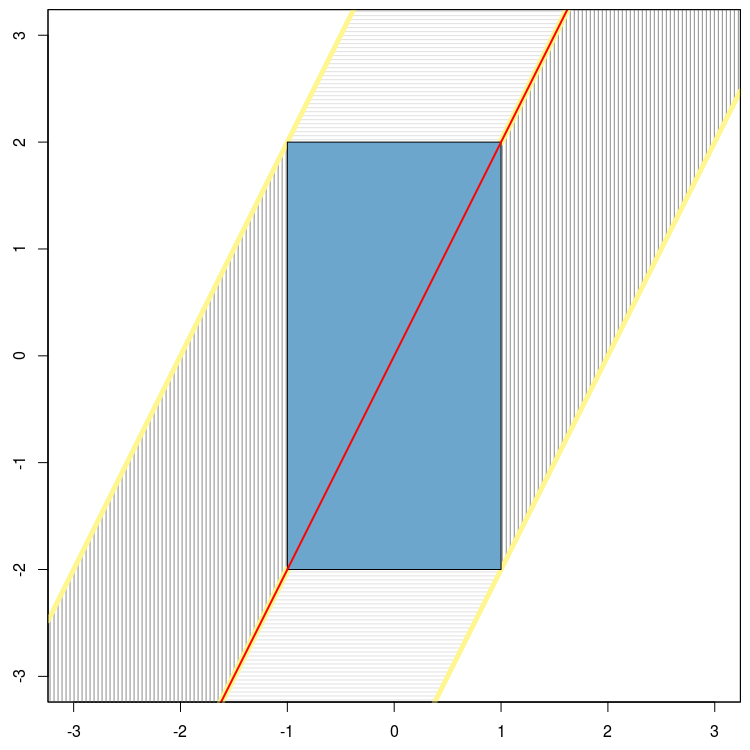}
\caption{\label{fig:shrinkHIGH2} The selection regions with respect to
$X'y$ from Theorem~\ref{thm:shrinkHIGH}, with $X = (1 , 2)$ and
$\lambda_1 = 1$, $\lambda_2 = 2$ from Example~\ref{ex:n1p2NONunique}.
Displayed in red is $\col(X')$, the area on which the probability mass
of $X'y$ is concentrated. The set
$T\left(\protect\begin{smallmatrix} 0 \\ 0\protect\end{smallmatrix}\right)$ 
is displayed in blue, while the parallel light gray lines represent 
$T\left(\protect\begin{smallmatrix} 0 \\ b_2\protect\end{smallmatrix}\right)$
with $b_2 \neq 0$, and the parallel dark gray lines are
$T\left(\protect\begin{smallmatrix} b_1 \\ 0\protect\end{smallmatrix}\right)$
with $b_1 \neq 0$. The yellow lines consist of the singletons 
$T\left(\protect\begin{smallmatrix} b_1 \\ b_2\protect\end{smallmatrix}\right)$ 
with $b_1, b_2\neq 0$. The red line passes through
$T\left(\protect\begin{smallmatrix} 0 \\
0\protect\end{smallmatrix}\right)$ 
where the solution is unique but also through the line where the light
gray, the dark gray and the yellow areas intersect.}
\end{figure} 

Of course, this problem could be overcome by slightly altering the
choice of the tuning parameters, even though this would imply to make
a choice of the class of models under consideration, as pointed out
previously in this section.

\subsection{Structural Sets} 
\label{subsec:struct}

Clearly, Example~\ref{ex:n1p2NONunique} shows that the structural set
may be equal to the entire set of explanatory variables. It is easy to
see that for $n=1$ and $p=2$, the Lasso estimator will always have a
structural set with cardinality $n=1$ whenever we have uniqueness. The
question is, of course, whether the same can be said in more
generality. Before answering this question, we show how the structural
set can be determined given $X$ and $\lambda$, by counting how many
sets of parallel facets of the $\lambda$-box $\B_\emptyset$ are
intersected by $\col(X')$.

\begin{theorem} \label{thm:struct}
Let $X \in \R^{n \times p}$ and $\lambda \in \Rgeq^p$ be given. Let
$\Mstruct$ be the so-called structural set of $X$ and $\lambda$ that
contains all $j \in \{1,\dots,p\}$, such that there exist $y \in \R^n$
so that a corresponding Lasso solution $\betaL$ satisfies $\betaLx{j}
\neq 0$, that is, $\Mstruct$ contains all regressors that are part of
a Lasso solution for some observation $y$. This set is given by
$$
\Mstruct = \Mstruct(X,\lambda) = \big\{j \in \{1,\dots,p\}:
\col(X')  \cap \B_{\{j\}} \neq \emptyset\big\}.
$$
\end{theorem}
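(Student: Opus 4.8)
The plan is to prove the displayed set equality by double inclusion, leaning almost entirely on Corollary~\ref{cor:modelselection} together with its stated monotonicity property $\B_{\tilde\M} \subseteq \B_\M$ for $\M \subseteq \tilde\M$. First I would restate the definition of the structural set in a convenient form: $\Mstruct$ collects exactly those indices $j$ lying in the support $\{k : \betaLx{k} \neq 0\}$ of some Lasso solution for some response $y$. Equivalently, calling a model $\M$ \emph{achievable} if it arises as the support of a Lasso solution for at least one $y$, one has $j \in \Mstruct$ if and only if $j$ belongs to some achievable $\M$.

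For the inclusion $\big\{j : \B_{\{j\}} \cap \col(X') \neq \emptyset\big\} \subseteq \Mstruct$, I would fix $j$ with $\B_{\{j\}} \cap \col(X') \neq \emptyset$ and apply Corollary~\ref{cor:modelselection} directly with the model $\M = \{j\}$. This produces a response $y$ whose Lasso solution selects the model $\{j\}$, so that $\betaLx{j} \neq 0$ for that solution, placing $j \in \Mstruct$.

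For the reverse inclusion, I would take $j \in \Mstruct$, fix a corresponding $y$ and a Lasso solution with $\betaLx{j} \neq 0$, and set $\M := \{k : \betaLx{k} \neq 0\}$, its support, so that $j \in \M$. Since the Lasso selects the model $\M$ for this $y$, Corollary~\ref{cor:modelselection} yields $\col(X') \cap \B_\M \neq \emptyset$. The decisive step is then to pass from $\B_\M$ down to the single-index box $\B_{\{j\}}$: because $\{j\} \subseteq \M$, the monotonicity property gives $\B_\M \subseteq \B_{\{j\}}$, whence $\col(X') \cap \B_{\{j\}} \supseteq \col(X') \cap \B_\M \neq \emptyset$, which is what was to be shown.

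The argument is short precisely because the substantive work was already carried out in Corollary~\ref{cor:modelselection}. The one point that needs care, rather than a genuine obstacle, is the handling of non-uniqueness in high dimensions: one must read ``$j$ is part of a Lasso solution for some $y$'' as membership in the support of \emph{at least one} solution, and then observe that this forces the selected model $\M$ to contain $j$, so that the monotonicity can reduce the question from the possibly larger box $\B_\M$ to $\B_{\{j\}}$. Keeping these set-containments pointed in the correct direction is the only thing to watch.
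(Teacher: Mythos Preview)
Your proposal is correct and follows essentially the same route as the paper's proof: both invoke Corollary~\ref{cor:modelselection} together with the monotonicity $\B_\M \subseteq \B_{\{j\}}$ for $\{j\} \subseteq \M$, the paper merely compresses the two inclusions into a single paragraph while you spell them out separately. Your remark on non-uniqueness is a welcome clarification but does not alter the argument.
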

\begin{proof}
By Corollary~\ref{cor:modelselection}, there exist $y \in \R^n$ such
that the corresponding Lasso solution chooses model $\M$ if and only
if $\ \col(X') \cap B_\M \neq \emptyset$. For any $\{j\} \subseteq \M$,
we have, by Corollary~\ref{cor:modelselection} also, $\B_\M \subseteq
\B_{\{j\}}$, so that $\B_{\{j\}} \cap \col(X') \neq \emptyset$ also.
\end{proof}

Theorem~\ref{thm:struct} shows that in order to determine the
structural set, only the intersection of $\col(X')$ with the
$(p-1)$-dimensional faces, the so-called facets of the $\lambda$-cube,
have to be considered. A strategy how to determine the structural set
for a given $X$ might be the following. Note that $\col(X') =
\ker(X)^\perp$ and find vectors $V_1,\dots,V_k \in \R^p$ that span
$\ker(X)$, where $k = p - \rk(X)$. Let $V = (V_1,\dots,V_k) \in \R^{p
\times k}$ and check whether $V's = 0$ is solvable for $s \in
\B_{\{j\}}$, where
$$
\B_{\{j\}} = [-\lambda_1,\lambda_1] \times \dots \times \{-\lambda_j,\lambda_j\} 
\times \dots \times [-\lambda_p,\lambda_p].
$$  
If this is the case, then $j \in \Mstruct$, otherwise $j \notin
\Mstruct$. So, determining the structural set amounts to identifying a
basis of $\ker(X)$, and solving a linear system in $k = p - \rk(X)$
equations and $p$ unknowns. After that, we have to check whether the
resulting solution set contains any elements of $\B_{\{j\}}$ for $j =
1,\dots,p$. This approach is employed in Example~\ref{ex:nonGP} in the
subsequent section.

We would like to point out the difference between the idea of a
structural set and results concerning SAFE rules, such as
\cite{ElGhaouiEtAl12}, \cite{TibshiraniEtAl12} and
\cite{NdiayeEtAl17}. Based on a SAFE rule, a regressor will be
discarded by a Lasso solution for a given observation $y$. In
contrast, if a covariate is not contained the the structural set, it
will be excluded from the Lasso model for all observations $y$. This,
on the one hand, implies that the result of Theorem~\ref{thm:struct}
is much cruder than a safe rule, excluding (if any) less regressors.
On the other hand, since the structural set is entirely independent of
$y$, the corresponding Lasso problem can equivalently be viewed as a
Lasso problem using covariates from $\Mstruct$ only, also regarding
distributional results and inference. In particular, if $|\Mstruct|
\leq n$, we can consider the low-dimensional Lasso problem using
$X_{\Mstruct}$ as regressor matrix. If $X_{\Mstruct}$ has full rank,
we can then use results from \cite{EwaldSchneider18} to construct
confidence regions. One has, of course, to be aware that inference is
now on the parameter $\tilde\beta$ satisfying $X\beta =
X_{\Mstruct}\tilde\beta$, as exemplified in Example~\ref{ex:n2p3}.

\begin{remark}
As indicated in Theorem~\ref{thm:struct} and as discussed above, the
structural set $\Mstruct$ depends on $X$ and $\lambda$ only. Moreover,
it can easily be seen that it depends on the tuning parameters
$\lambda$ only through the penalization weighting in the sense that
whenever $\lambda = \bar\lambda \omega$ for some $\bar\lambda > 0$ and
$\omega \in \Rgeq^p$, $\Mstruct(X,\lambda) = \Mstruct(X,\omega)$
follows. This implies that, in particular, in the common case of
uniform tuning with $\bar \lambda = \lambda_1 = \dots = \lambda_p$,
the structural set \emph{only depends on} $X$!
\end{remark}

Coming back to the conjecture whether the structural set always
satisfies $|\Mstruct| \leq \min\{n,p\}$ in case the solutions are
unique, using Theorem~\ref{thm:struct}, we can list the following
simple example with $n=2$ and $p=3$ to show that this cannot be the
case in general. However, note that Theorem~\ref{thm:struct} allows to
compute the structural set and that whenever $|\Mstruct| \leq n$, the
resulting Lasso estimator is, in fact, just equivalent to a
low-dimensional procedure.

\begin{example} \label{ex:fullstructset}
Let 
$$
X = \left(\begin{matrix}
1 & 1 & 0 \\
0 & 1 & 1 
\end{matrix}\right)
$$
and $\lambda = (1,1,1)'$. Then the structural set is clearly given by
$$
\Mstruct = \{1,2,3\},
$$
as $(1,1,0)' \in \col(X') \cap \B_{\{1,2\}}$ and $(0,1,1)' \in
\col(X') \cap \B_{\{2,3\}}$, and $\B_\M \subseteq \B_{\{j\}}$ for any
$j \in \M$ by Corollary~\ref{cor:modelselection}, see
Figure~\ref{fig:shrinkHIGH3} for illustration. Yet the Lasso solutions
for this $X$ and $\lambda$ are always unique which can be checked on
the basis of Theorem~\ref{thm:unique} in the subsequent section.
\end{example}

\begin{figure}[htp]
\centering \includegraphics[width=0.43\textwidth]{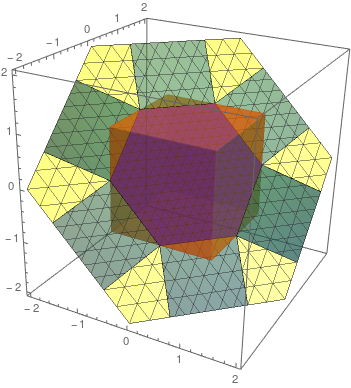}
\caption{\label{fig:shrinkHIGH3} The intersection of $\col(X')$ (in
gray and yellow) with the unit cube (in orange) for $\lambda_1 =
\lambda_2 = \lambda_3 = 1$ and $X = \left(\protect\begin{smallmatrix} 1 & 1 & 0 \\ 
0 & 1 & 1\protect\end{smallmatrix}\right)$, see Example~\ref{ex:fullstructset}. 
The upper left edge is contained in $\B_{\{1,2\}}$, whereas the upper
back edge is contained in $\B_{\{2,3\}}$. (Each $\B_{\{i,j\}}$
contains four parallel edges.) To view this figure in terms of
selection regions, note that the areas corresponding to
single-regressor models are displayed in gray, while the selection
regions that correspond to two-regressor models are displayed in
yellow. The intersection of the $\lambda$-cube with $\col(X')$, which
corresponds to the zero estimator, is displayed in blue.}
\end{figure}

Finally, it is important to note that Theorem~\ref{thm:struct} also
reveals that $\Mstruct$ contains all regressors if the columns of $X$
are scaled to have unit length and the components are tuned uniformly:
For $s = \bar\lambda X'X_j \in \col(X')$, by the Cauchy-Schwarz
inequality, we have $s \in \B_{\{j\}}$ also, leading to $j \in
\Mstruct$. 

\addtocounter{example}{-3}
\begin{example}[continued]
If we rescale the columns of $X$ from Example~\ref{ex:n2p3}, we obtain
$$
\tilde X = \left(\begin{matrix} 
\frac{2}{\sqrt{5}} & 0 & 1\\
\frac{1}{\sqrt{5}} & 1 & 0 
\end{matrix} 
\right).
$$ 
Since the first row is an element of $\B_{\{3\}}$, and the second row
is an element of $\B_{\{2\}}$, clearly $\{2,3\} \subseteq \Mstruct$.
To see that $\{1\} \subseteq \Mstruct$ also, note that the first row
plus $\sqrt{5} - 2$ times the second row lies in $\B_{\{1\}}$ --
yielding a full structural set.
\end{example}
\addtocounter{example}{2}

The above observation may be seen as an argument for rescaling the
regressors before using the Lasso. It may, however, not always be
desirable to so, such as in case the explanatory variables are
observed in the same units, or in the presence of dummy variables.
Also, rescaling the columns may result in changing whether or not the
solutions are unique, an issue addressed in the following section.

\subsection{A Necessary and Sufficient Condition for Uniqueness}
\label{subsec:unique}

We now turn to some results revolving around uniqueness of the Lasso
estimator, which can be obtained with the same geometric approach,
that is, studying the intersection of the $\lambda$-cube with
$\col(X')$. Note that by uniqueness, we mean that for a given $X \in
\R^{n \times p}$, and a given $\lambda \in \R^p$, the Lasso solutions
are unique for all observations $y \in \R^n$.

\cite{Tibshirani13} showed that for a given regressor matrix $X$,
Lasso solutions are unique in the above sense, if the columns of $X$
are \emph{in general position}\footnote{Note that \emph{general
position} does not mean that any selection of $n$ columns of $X$ is
linearly independent, as has sometimes been suggested in the
literature, these two concepts are in fact unrelated.}, which occurs
when no $k$-dimensional affine\footnote{In \cite{Tibshirani13}, the
word ``affine'' is missing, which has caused some confusion in the
literature.} subspace for $k < \min(n,p)$ contains more than $k+1$
elements of the set $\{\pm X_1,\dots,\pm X_p\}$, excluding antipodal
pairs \citep[see p.\ 1463 in][]{Tibshirani13}. In fact, the solutions
are then unique for all choices of the tuning parameter, provided that
all components are tuned equally. As this condition is sufficient, one
may ask whether it is also necessary. The answer to this question is,
in fact, no, as can easily be seen from the example below.

When can non-unique solutions exist? For a given $X \in \R^{n \times
p}$ and $\lambda \in \R^p$, this occurs if and only if there exist $b,
\tilde b \in \R^p$ with $b \neq \tilde b$ and
$$
\col(X') \cap T(b) \cap T(\tilde b) \neq \emptyset.
$$
More concretely, by Theorem~\ref{thm:shrinkHIGH}, and since the Lasso fit
$Xb$ is always unique\footnote{This has been shown in Lemma~1 in
\cite{Tibshirani13} for uniform tuning and can easily be extended to
non-uniform and partial tuning.}, this means that
$$
X'Xb + v = X'X\tilde b + v,
$$ 
where $v \in \col(X') \cap \B_\M$ for some $\M \subseteq
\{1,\dots,p\}$, and $\tilde b_{\M^c} = b_{\M^c} = 0$. Moreover, for $j
\in \M\setminus\M_0$, we have $\sign(b_j) = \sgn(v_j)$ whenever $b_j
\neq 0$, as well as $\sign(\tilde b_j) = \sgn(v_j)$ whenever $\tilde
b_j \neq 0$. Note that we therefore have $Xb = X_\M b_\M = X_\M \tilde
b_\M = X \tilde b$, implying that the columns of $X_\M$ must be
linearly dependent. So non-uniqueness occurs only if $\col(X') \cap
\B_\M \neq \emptyset$ for $\M \subseteq \{1,\dots,p\}$ with linearly
dependent columns in $X_\M$. The following example now immediately
shows that the columns of $X$ being in general position is not
necessary for uniqueness.

\begin{example} \label{ex:nonGP}
Let 
$$
X = \left(\begin{matrix}
1 & 1 & 2 & 0 \\
0 & 0 & 1 & 3
\end{matrix}\right).
$$
Clearly, the columns are not general position, however, all Lasso
solutions are unique for any choice of the tuning parameter, when the
components are tuned uniformly: We have $\col(X') \cap \B_\M =
\emptyset$ whenever $\{1,2\} \subseteq \M$ or $|\M| > 2$. This can
easily be checked using the fact that $v \in \col(X')$ if and only if
$v'w_1 = v'w_2 =0$ for $\ker(X) = \col\{w_1,w_2\}$. Therefore, the
columns of $X_\M$ are linearly independent for any $\M$ that can be
chosen by the Lasso, and all Lasso solutions must be unique.
\end{example}

The example above illustrates the commonly known fact that, if a Lasso
solution is unique, it will contain at most $n$ non-zero entries. We
show that this fact can be sharpened to yield a necessary and
sufficient condition for uniqueness of all Lasso solutions in the
following way: first, we show that if the solution is unique, it in
fact has at most $\rk(X) \leq n$ non-zero components. Second, we prove
that this is not only a necessary, but also a sufficient criterion for
uniqueness.

\begin{theorem}[Uniqueness] \label{thm:unique} 
Let $X \in \R^{n \times p}$ and $\lambda \in \Rgeq^p$. The Lasso
solution is unique for all $y \in \R^n$ if and only if
$$
\col(X') \cap \B_\M = \emptyset 
\text{ for all } \M \subseteq \{1,\dots,p\} \text{ with } |\M| > \rk(X).
$$
\end{theorem}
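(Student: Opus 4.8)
The plan is to reduce the biconditional to a purely combinatorial–geometric statement about the polytope $K = \col(X') \cap Q$, where $Q = \prod_{j=1}^p [-\lambda_j,\lambda_j]$ is the $\lambda$-box, and then settle it by a face-dimension argument. Recall from the discussion preceding the theorem that the Lasso solution is unique for all $y$ if and only if $\col(X') \cap \B_\M = \emptyset$ for every $\M$ whose columns $X_\M$ are linearly dependent. For $v \in K$ write $A(v) = \{j : |v_j| = \lambda_j\}$ for its active set. Since $\B_\M \subseteq Q$, any element of $\col(X') \cap \B_\M$ lies in $K$, and for $v \in K$ the membership $v \in \B_\M$ is equivalent to $\M \subseteq A(v)$; hence $\col(X') \cap \B_\M \neq \emptyset$ exactly when some $v \in K$ satisfies $\M \subseteq A(v)$. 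Writing $r = \rk(X)$, I would first record the two reformulations this yields: the displayed condition in the theorem fails precisely when some $v \in K$ has $|A(v)| > r$, while (by the recalled characterization, together with the fact that $X_\M$ is dependent for some $\M \subseteq A(v)$ exactly when $X_{A(v)}$ is dependent) non-uniqueness occurs precisely when some $v \in K$ has $X_{A(v)}$ linearly dependent. The theorem thus becomes the equivalence of ``every $v \in K$ has $|A(v)| \le r$'' with ``every $v \in K$ has $X_{A(v)}$ linearly independent''.

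With this in hand the easy direction is pointwise: if $X_{A(v)}$ is linearly independent then $|A(v)| = \rk(X_{A(v)}) \le r$, so uniqueness forces the theorem's condition. The substance is the converse, which I would prove in contrapositive form: assuming some $v \in K$ has $X_{A(v)}$ linearly dependent, I must produce $w \in K$ with $|A(w)| > r$. To do so, choose $v^\ast \in K$ maximizing $|A(v^\ast)|$ among all points of $K$ whose active submatrix is linearly dependent (a nonempty collection determining finitely many candidate active sets, so the maximum is attained), put $A^\ast = A(v^\ast)$, and claim $|A^\ast| > r$.

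The heart of the argument, and the step I expect to be the main obstacle, is the dimension bookkeeping showing that $|A^\ast| \le r$ would contradict maximality. Consider the face $F^\ast = \{w \in \col(X') : w_j = v^\ast_j \text{ for } j \in A^\ast,\ |w_k| \le \lambda_k \text{ for } k \notin A^\ast\}$ of $K$ on whose relative interior $v^\ast$ lies. Because every inequality constraint for $k \notin A^\ast$ is strict at $v^\ast$, the affine hull of $F^\ast$ equals $L = \{w \in \col(X') : w_{A^\ast} = v^\ast_{A^\ast}\}$. Parametrizing $\col(X')$ as $\{X'u : u \in \R^n\}$ shows that the coordinate projection $w \mapsto w_{A^\ast}$ carries $\col(X')$ onto the row space of $X_{A^\ast}$, of dimension $\rk(X_{A^\ast})$, whence $\dim F^\ast = \dim L = r - \rk(X_{A^\ast})$. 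If $|A^\ast| \le r$, linear dependence of $X_{A^\ast}$ gives $\rk(X_{A^\ast}) \le |A^\ast| - 1 \le r - 1$, so $\dim F^\ast \ge 1$. Moving from $v^\ast$ within the compact face $F^\ast$ to its relative boundary then activates at least one further constraint $|w_k| = \lambda_k$ with $k \notin A^\ast$, producing $v' \in K$ with $A(v') \supsetneq A^\ast$; since $A^\ast \subseteq A(v')$, the submatrix $X_{A(v')}$ remains linearly dependent, contradicting maximality of $|A^\ast|$. Hence $|A^\ast| > r$, so $w = v^\ast$ is the desired point and the theorem follows. The delicate points to get right are that the affine hull of $F^\ast$ is exactly $L$, so the dimension count is valid, and that passing to the relative boundary of $F^\ast$ can only enlarge, never shrink, the active set.
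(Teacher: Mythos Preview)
Your proof is correct, and your ``hard direction'' (producing $w\in K$ with $|A(w)|>r$ from some $v\in K$ with $X_{A(v)}$ dependent) is the same idea as the paper's ($\impliedby$) argument, recast in polytope language: the paper picks $z\in\col(X_\M)^\perp\setminus\col(X_{\M^c})^\perp$ and slides $v$ along $X'z$ until a new coordinate reaches $\pm\lambda_k$, which is exactly your move from $v^\ast$ to the relative boundary of the face $F^\ast$. Your dimension identity $\dim F^\ast=r-\rk(X_{A^\ast})$ is the geometric counterpart of the paper's observation that such a $z$ exists whenever $\rk(X_\M)<\rk(X)$.

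The one caveat concerns your ``easy direction.'' You invoke the biconditional from the discussion preceding the theorem (non-uniqueness $\iff$ $\col(X')\cap\B_\M\neq\emptyset$ for some $\M$ with $X_\M$ linearly dependent). The discussion, however, only argues the forward implication (non-uniqueness forces such an $\M$); the reverse implication---that a dependent $X_\M$ with $\col(X')\cap\B_\M\neq\emptyset$ actually yields two distinct Lasso solutions for some $y$---is precisely what the paper establishes in its ($\implies$) proof by an explicit construction of $b\neq\tilde b$ with $X'y\in\bar S(b)\cap\bar S(\tilde b)$. So while your citation is to a statement the paper makes, that statement is not self-contained at the point you cite it, and your argument for this direction is therefore not independent of the paper's own construction. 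To make your proof stand alone you would need to supply that construction (or an equivalent one) yourself; the face-geometric framework you have set up does not by itself produce two solutions.
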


\begin{proof} ($\implies$) Assume the condition is not
satisfied. Then there exists $v \in \B_\M$ with $|\M| > \rk(X)$ and $v
= X'z$ for some $z \in \R^n$. We show that there is a $y \in \R^n$
such that the corresponding Lasso problem is not uniquely solvable.

If $X_j = 0$ for some $j \in \M_0$, we are done as the corresponding
coefficient may be arbitrary. Note that $X_j = 0$ for $j \in
\M\setminus \M_0$ is not possible: since $v \in \col(X')$, this would
imply $v_j = 0$, but that contradicts $v \in \B_\M$. We therefore
assume that $X_j \neq 0$ for all $j \in \M$.

Since $|\M| > \rk(X)$, there is a column of $X_\M$, say $X_j$ ($X_j
\neq 0$), that can be written as a linear combination of the other
columns. In particular, we can write
$$
dX_j = \sum_{l \in \M\setminus\{j\}} c_l X_l,
$$
where $d = \sgn(v_j)$ if $\lambda_j \neq 0$ and $d = 1$ if $\lambda_j
= 0$. Moreover, let $c = \max_{l \in \M\setminus\{j\}} |c_l| > 0$.
Define $b \in \R^p$ by
$$
b_l  = \begin{cases} 
\frac{d}{2c}  & l = j \\
\sgn(v_l) & l \in \M\setminus\{j\} \\
0 & l \notin \M. 
\end{cases}
$$
Then $b$ is a Lasso solution for $y = z + Xb$ since
$$
X'y = X'Xb + X'z = X'Xb + v \in T(b).
$$
We now construct $\tilde b \in \R^p$, with $\tilde b \neq b$, that is
also a Lasso solution for the same $y$ by
$$
\tilde b_l  = \begin{cases} 
\sgn(v_l) + \frac{c_l}{2c} & l \in \M\setminus\{j\} \\
0 & l = j \text{ or } l \notin \M. 
\end{cases}
$$
Clearly, $b \neq \tilde b$, $\sgn(b_l) = \sgn(\tilde b_l) = \sgn(v_j)$
for $l \in \M\setminus\{j\}$ and
$$
Xb = \sum_{l \in \M} b_l X_l = \sum_{l \in \M\setminus\{j\}} b_l X_l + \frac{d}{2c}X_j
= \sum_{l \in \M\setminus\{j\}} (b_l + \frac{c_l}{2c}) X_l = X\tilde b.
$$
We therefore get
$$
X'y = X'Xb + v = X'X\tilde b + v \in S(\tilde b)
$$
also, implying that both $b$ and $\tilde b$ are Lasso solutions for
the given $y$.

\medskip

\noindent ($\impliedby$) We now prove the other direction. Assume that
there exists $y \in \R^n$ such that non-unique Lasso solutions $b \neq
\tilde b$ exist. As discussed above, this implies the existence of $v
\in \ \col(X') \cap \B_\M$ for some $\M \subseteq \{1,\dots,p\}$ with
$X_\M b_\M = X_\M \tilde b_M$ and $b_{\M^c} = \tilde b_{\M^c} = 0$,
entailing that the columns of $X_\M$ are linearly dependent.

If $|\M| > \rk(X)$, we are done. If $|\M| \leq \rk(X)$, we do the
following. Since we have $\rk(X_\M) < |\M| \leq \rk(X)$, we can pick
$z \in \R^n$ such that $z \in \col(X_\M)^\perp\setminus
\col(X_{\M^c})^\perp$. This is possible since
\begin{align*} 
\col(X_\M)^\perp & \setminus\col(X_{\M^c})^\perp = \emptyset \iff 
\col(X_\M)^\perp \subseteq \col(X_{\M^c})^\perp \\ 
\iff & \col(X_{\M^c}) \subseteq \col(X_\M) \iff \col(X_\M) = \col(X_\M,X_{\M^c}) = \col(X) \\ 
\iff & \rk(X_\M) = \rk(X),
\end{align*}
which is not the case. This $z$ satisfies $(X'z)_\M = (X_\M)' z = 0$
and $(X'z)_{\M^c} = (X_{\M^c})' z \neq 0$, so that we can find $c \in
\R$ such that
$$
\tilde v = v + c\, X'z \in \B_{\tilde\M} \cap \col(X'),
$$
with $\M \subseteq \tilde\M$ and $|\M| < |\tilde\M|$. As long as
$|\tilde\M| \leq \rk(X)$, repeat the steps above with $v = \tilde v$
and $\M = \tilde \M$.
\end{proof}

Note that just as for Theorem~\ref{thm:struct}, the result from the
above theorem depends on $\lambda$ only through the penalization
weights, meaning that for any $\M \subseteq \{1,\dots,p\}$, whenever
$\lambda = \bar \lambda \omega$ for some $\bar \lambda > 0$ and
$\omega \in \Rgeq^p$, we have $\col(X') \cap \B_\M(\lambda) =
\emptyset$ if and only if $\col(X') \cap \B_\M(\omega) \neq \emptyset$
(when indicating the dependence of $\B_\M$ on the tuning parameters).

As mentioned in the preamble of Section~\ref{sec:HIGH},
Theorem~\ref{thm:unique} does not require $p > n$, so that it also covers
the low-dimensional case. Clearly, the condition for uniqueness is
trivially satisfied if $\rk(X) = p$.

\section{Conclusion} 
\label{sec:conclusion}

We give explicit formulae regarding the distribution of the Lasso
estimator in finite-samples, assuming a Gaussian distribution of
$X'\eps$. In the low-dimensional case, we consider the cdf as well as
the density functions conditional on ``active sets'' of the estimator.
Our results exploit the structure of the underlying optimization
problem of the Lasso estimator and do not hinge on the normality
assumption. We also explicitly characterize the correspondence between
the Lasso and the LS estimator: It is shown that the Lasso estimator
essentially creates shrinkage areas around the axes inside which the
probability mass of the LS estimator is compressed into
lower-dimensional densities that can be specified conditional on the
active set of the estimator. As a result, the distribution looks like
a pieced-together combination of Gaussian-like densities. Each active
set has its own distributional piece with dimension depending on the
number of nonzero components, resulting also in point mass at the
origin and mass being distributed along the axes.

The form of the distribution is even more intricate in the
high-dimensional case, in which the estimator may not be unique
anymore. We quantify the relationship between a Lasso solution and the
quantity $X'y$ (rather than the LS estimator as in the low-dimensional
case). We gain valuable insights into the behavior of the estimator by
illustrating that some models may never be selected by the estimator:
The so-called structural set, that contains all covariates that are
part of a Lasso solution for \emph{some} response vector $y$, can be
computed based on a geometric condition involving the regressor matrix
and penalization weights only. In case this structural set has
cardinality less than or equal to $n$, the Lasso is equivalent to a
low-dimensional procedure and results from the $p \leq n$-framework
can be used for inference. We also learn that in case of uniform
tuning and the columns of $X$ scaled to unit length, the structural
set contains all covariates.

Finally, the previous insights allow us to close a gap in the
literature by providing a condition for uniqueness of the Lasso
estimator that is both necessary and sufficient.

\section*{Acknowledgements}
\label{sec:acknowledge}

The authors gratefully acknowledge support from the Deutsche
Forschungsgemeinschaft (DFG) through grant FOR 916 and thank Thomas
Hack and Nikos Dafnis for very insightful discussions on the geometric
aspects of this paper. We also thank two anonymous referees for
their comments.


\end{document}